\documentclass[journal,twoside,web]{ieeecolor}
\usepackage{generic}
\usepackage{cite}
\usepackage{amsmath,amssymb,amsfonts}
\usepackage{algorithmic}
\usepackage{graphicx}
\usepackage{subcaption}
\usepackage{algorithm,algorithmic}
\usepackage{hyperref}
\hypersetup{hidelinks=true}
\usepackage{textcomp}

\newtheorem{theorem}{Theorem}[section]
\newtheorem{proposition}[theorem]{Proposition}
\newtheorem{lemma}[theorem]{Lemma}

\newtheorem{remark}[theorem]{Remark}
\newtheorem{definition}[theorem]{Definition}
\newtheorem{example}[theorem]{Example}
\newtheorem{problem}{Problem}

\newtheorem{assumption}{Assumption}

\def\BibTeX{{\rm B\kern-.05em{\sc i\kern-.025em b}\kern-.08em
    T\kern-.1667em\lower.7ex\hbox{E}\kern-.125emX}}
\begin{document}
\title{Timed Recovery Safe Gradient Flows for Constrained Nonlinear Programming}
\author{Tianxi Luo, Yue Wu, Fei Chen, ~\IEEEmembership{Senior Member,~IEEE}, and Hongsheng Qi
\thanks{This work was supported in part by the National Natural Science Foundation of China under Grant 62573240 and Grant 61973061; and in part by the TCL Technology Innovation Fund of TCL Technology Group Corporation. (Corresponding author: Fei Chen.)}
\thanks{Tianxi Luo and Fei Chen are with the College of Artificial Intelligence, Nankai University, Tianjin 300350, China (e-mail: 2120250490@mail.nankai.edu.cn; fchen@nankai.edu.cn).}
\thanks{Yue Wu and Hongsheng Qi are with the State Key Laboratory of Mathematical Sciences, Academy of Mathematics and Systems Science, Chinese Academy of Sciences, Beijing 100190, China, and with the School of Mathematical Sciences, University of Chinese Academy of Sciences, Beijing 100049, China (e-mail:wuyue24@amss.ac.cn; qihongsh@amss.ac.cn)
    }}

\maketitle

\begin{abstract}
This paper studies how to relax pointwise feasibility in continuous-time dynamics for inequality-constrained nonlinear programming while retaining a quantitative feasible-recovery guarantee. Existing safe gradient flows enforce forward invariance of the feasible set, which yields an anytime-feasible dynamics but can be overly restrictive for optimization because it suppresses transient infeasible motion that may be essential for effective descent. To address this limitation, we propose a timed recovery safe gradient flow that replaces pointwise feasibility with fixed-horizon feasible recovery. The proposed construction augments the state with a timer that records the elapsed infeasible duration and computes a minimum-norm correction to the nominal gradient flow through a pointwise convex quadratic program. We show that the resulting augmented safe set is forward invariant and that every trajectory initialized from the feasible set returns to it within every time window of length \(\tau\). We further show that, in a strict interior inactive regime, the closed-loop dynamics reduces exactly to the nominal gradient flow and recovers the corresponding local convergence behavior. Numerical examples illustrate how the recovery horizon shapes the closed-loop behavior and corroborate the theoretical analysis.
\end{abstract}

\begin{IEEEkeywords}
Constrained nonlinear programming, control barrier functions, continuous-time optimization, gradient flows, recurrent safety. 
\end{IEEEkeywords}

\section{Introduction}
Continuous-time formulations provide a natural language for viewing optimization problems as dynamical processes rather than purely numerical procedures. This viewpoint has a long history in optimization and control, and has developed into a systematic framework for analyzing descent, stability, and convergence from a dynamical-systems perspective \cite{1958Studies,BROCKETT199179,503147,HAUSWIRTH2024100941,hauswirth2021projected}. It is especially natural for constrained nonlinear programming, where one must not only drive the state toward lower objective values, but also shape the evolution so that it remains compatible with the geometry of the feasible set. In this sense, constrained optimization acquires an intrinsically control-theoretic character: the objective prescribes the desired motion, while the constraints determine how that motion can be realized. This viewpoint underlies a broad spectrum of continuous-time methods for constrained optimization, including differential-geometric feasible flows, projected dynamical systems, nonsmooth constraint-aware first-order methods, and primal-dual optimization dynamics \cite{TANABE1979707, nagurney2012projected, FEIJER20101974, 15M1026924}.

Among recent control-theoretic approaches, a particularly influential formulation is the safe gradient flow proposed in \cite{10224270}. There, the feasible set is treated as a safe set, and a feedback law based on control barrier function(CBF) is synthesized so that the resulting dynamics preserves feasibility through forward invariance while driving the state toward critical points of the constrained problem. This perspective is appealing because it places constrained nonlinear programming directly within a feedback-design framework and endows the flow with an anytime-feasibility interpretation. More broadly, it has stimulated a growing line of work on control-theoretic continuous-time optimization, including feedback-based multiplier dynamics for constrained optimization \cite{10994413}, safe extensions to bilevel optimization \cite{11107448}, safety-preserving methods for time-varying nonlinear programming with equality constraints \cite{11208701}, and CBF-based constrained gradient flows with finite-time and fixed-time convergence guarantees \cite{11222659}. At the same time, recent advances in feedback-based online optimization and constrained first-order methods further highlight how continuous-time dynamics, primal-dual couplings, and augmented-Lagrangian ideas can be used to connect optimization algorithms with closed-loop system design \cite{colombino2019online, de2023constrained, zhu2024first}. Taken together, these developments point to a common theme: constrained optimization can be studied as the synthesis of dynamics in which objective-driven descent and constraint handling are inseparably intertwined. 

However, when the primary objective is optimization rather than safety at every instant, strict pointwise feasibility can be too rigid. Near the feasible boundary, the nominal descent direction may naturally point outward, so any feasible-by-design dynamics must override that direction exactly where the objective most strongly favors leaving the set \cite{ALI199857,TANABE1979707, nagurney2012projected, 10224270}. Pure optimization methods avoid this rigidity, but they do so in ways that still do not provide the semantics sought here. Interior-point and barrier-based dynamics preserve strict interiority, yet for finite barrier parameters they evolve on a barrier-regularized surrogate rather than directly on the original constrained problem \cite{Fiacco1968NonlinearPS, 2006On, fazlyab2017prediction}. Penalty, multiplier, and augmented-Lagrangian methods do allow temporary infeasibility, but violations are handled indirectly through penalty terms or multiplier updates, so the dynamics is governed by a modified objective or augmented Lagrangian rather than by an explicit recovery requirement on the state trajectory \cite{dhingra2018proximal, de2023constrained, zhu2024first, tang2022running, jia2023augmented}. Likewise, recent first-order constrained methods can permit infeasible iterates and local constraint approximations, but they are generally designed for asymptotic feasibility or stationarity rather than for certifying that every infeasible excursion must end within a prescribed time horizon \cite{muehlebach2021constraints}. What remains missing is a continuous-time optimization dynamics that allows temporary infeasibility without replacing the problem by a barrier or penalty surrogate, while still guaranteeing quantitative bounded-time return to feasibility.

This mismatch becomes especially sharp when the feasible set is disconnected. In that case, pointwise feasibility is no longer merely a safety requirement; it becomes a reachability restriction. Any forward-invariant dynamics initialized in one feasible component is confined to that component for all future time, so the set of reachable local optima is already topologically determined by the initial condition. This is not a pathological corner case. In general nonlinear programming, Lee and Chiang showed that disconnected feasible components naturally give rise to multiple local optima and explicitly motivate dynamical mechanisms capable of moving across components \cite{1304911}. Similar phenomena also arise in important problems such as AC optimal power flow, where disconnected feasible regions, multiple local optima, and the decisive role of feasible-set geometry have all been documented \cite{Gomez1982, bukhsh2013local,zhang2012geometry,lavaei2013geometry}. From the perspective of the present work, this is precisely the regime in which strict feasibility preservation becomes too rigid for optimization. 

These observations bring the unresolved issue into sharp focus. Continuous-time constrained optimization already has two well-understood limiting semantics: unconstrained descent, which preserves optimization-relevant motion at the price of feasibility, and forward-invariant safe optimization, which preserves feasibility at the price of suppressing such motion. What is still missing is the trajectory-level constraint semantics between these two limits, one that permits temporary infeasibility while still regulating its duration and recovery. The central question is therefore this: can instantaneous feasibility be relaxed in a way that still enforces a rigorous and quantitative law of return to the feasible set?

Recurrence provides a natural framework for expressing this intermediate requirement, but its use in optimization is not merely a direct import of existing recurrent-safety ideas. Bounded-return concepts have appeared in region-of-attraction learning \cite{9993280}, recurrence-based stability analysis \cite{10383373}, constrained predictive control \cite{SIBAI2026101649}, and persistent monitoring with recharging \cite{2025Multirobot}. Within barrier-function theory, recurrent barrier conditions formalize this viewpoint by replacing forward invariance with bounded-time return to a target superlevel set \cite{10735269}. The challenge in the present setting is different and more delicate. The trajectory is not generated by an arbitrary controlled system whose only task is to return to a prescribed target set; it is driven by the descent direction of the original objective. Thus, the recovery mechanism must regulate infeasible excursions without destroying the optimization motion that made such excursions useful in the first place. In particular, it must allow the trajectory to leave the feasible set $C_g$ when doing so benefits descent, yet enforce a quantitative return law before the violation persists beyond the prescribed horizon. To meet this challenge, we use recurrence not as a property to be certified after the dynamics are given, but as a design principle for constructing the dynamics themselves. We encode fixed-horizon feasible recovery through a timed recovery safe gradient flow, which augments the nominal gradient dynamics with a timer-based recovery constraint. At each state, a pointwise convex quadratic program computes the minimum-norm correction required to satisfy this constraint, thereby enforcing bounded-time return to $C_g$ while perturbing the original gradient flow only when necessary.

This paper makes three contributions.
First, we introduce fixed-horizon feasible recovery as a trajectory-level constraint semantics for continuous-time nonlinear programming. Unlike forward-invariant safe gradient flows, which require $x(t)\in \mathcal C_g$ for all $t$, the proposed semantics allows transient infeasibility while ruling out infeasible intervals longer than a prescribed horizon $\tau$. Second, we construct a timer-regulated recurrent feasibility filter for nominal gradient flows. The filter augments the state with a recovery-budget variable and computes, through a pointwise convex quadratic program, the minimum-norm correction required to satisfy a recurrent barrier condition. Thus the nominal gradient direction is preserved whenever the recovery constraint is inactive. Third, we characterize the behavior induced by the recovery horizon. We prove forward invariance of the augmented budget set and
$\tau$-recurrent return to the original feasible set; show that the feasible boundary becomes permeable while the dynamics becomes increasingly restorative as the recovery budget is depleted; and establish two limiting regimes: the zero-horizon limit recovers a CBF-type safe gradient flow, whereas the infinite-horizon limit recovers the nominal gradient flow. We also identify a strict interior inactive regime in which the closed loop inherits the local convergence behavior of the nominal gradient flow.


The paper is organized as follows. Section~\ref{sec:preliminaries_problem} presents the preliminaries and problem formulation. Section~\ref{sec:rcbf} develops the timer-regulated recurrent feasibility filter. Section~\ref{sec:optimization_dynamics} constructs the fixed-$\tau$
timed recovery safe gradient flow and establishes its recurrent safety properties. Section~\ref{sec:system_behavior} analyzes the resulting qualitative behavior, compatibility, and horizon-dependent tradeoffs. Section~\ref{sec: simulations} reports numerical examples. 

\textbf{\textit{Notation: }}
$\operatorname{dist}(\mathbf{z}, \mathcal{A}) \triangleq \inf_{\mathbf{y} \in \mathcal{A}} \| \mathbf{z} - \mathbf{y} \|$ denotes the distance from a point $\mathbf{z} \in \mathcal{X}$ to a set $\mathcal{A} \subset \mathcal{X}$. For any scalar $a\in\mathbb{R}$, $[a]_+ \triangleq \max\{a,0\}$ denotes the positive part of $a$. For any set \(S\), \(\partial S\) and \(\operatorname{int}(S)\) denote the boundary and interior of \(S\), respectively. A continuous function \(\alpha:\mathbb R\to\mathbb R\) is called an extended class \(\mathcal K\) function if it is strictly increasing and satisfies \(\alpha(0)=0\).

\section{Preliminaries and Problem Formulation}

\label{sec:preliminaries_problem}
This section first presents the fundamental concepts used throughout the paper, and then formulates the fixed-horizon timed recovery design problem.

\subsection{Preliminaries}

Throughout this subsection, we consider a locally Lipschitz dynamical system
\begin{equation}\label{eq:general_dynamics}
    \dot x = F(x),
\end{equation}
defined on a state space \(\mathcal X\subseteq \mathbb R^N\). For any initial condition \(x_0\in\mathcal X\), we denote by \(\phi(t,x_0)\) the corresponding solution at time \(t\), whenever it is well defined.

\begin{definition}[Forward invariance]\cite{10224270}
Let \(S\subseteq \mathcal X\). The set \(S\) is said to be \emph{forward invariant} if every trajectory starting from \(S\) remains in \(S\) for all future time, that is,
\[
x_0\in S
\quad\Longrightarrow\quad
\phi(t,x_0)\in S,
\quad \forall t\ge 0.
\]
\end{definition}

\begin{definition}[Recurrent and \(\tau\)-recurrent sets]\cite{10735269}
Let \(S\subseteq \mathcal X\). The set \(S\) is called \emph{recurrent} if, for every \(x\in S\) and every \(t\ge 0\), there exists \(t'>t\) such that
\[
\phi(t',x)\in S.
\]
Given a constant \(\tau>0\), the set \(S\) is called \(\tau\)-\emph{recurrent} if, for every \(x\in S\) and every \(t\ge 0\), there exists \(t'\in(t,t+\tau]\) such that $\phi(t',x)\in S$. 
\end{definition}

The definition of $\tau$-recurrence requires that every trajectory starting from $S$ returns to $S$ 
within any time window of length $\tau$. This allows temporary infeasibility, while ensuring that every infeasible excursion has a uniformly bounded duration.

\begin{definition}[Karush--Kuhn--Tucker point]\cite{nocedal2006numerical}
A feasible point \(x^*\in\mathbb R^n\) is called a Karush--Kuhn--Tucker point of \eqref{eq:nlp} if there exists a multiplier vector \(p^*\in\mathbb R^m\) such that
\begin{subequations}\label{eq:kkt}
\begin{align}
\nabla f(x^*)+\frac{\partial g(x^*)}{\partial x}^{\top}p^* &= 0,\\
g(x^*) &\le 0,\\
p^* &\ge 0,\\
(p^*)^\top g(x^*) &= 0.
\end{align}
\end{subequations}
\end{definition}
We denote by
\[
X_{\mathrm{KKT}}
\triangleq
\{x\in\mathcal C_g \mid x \text{ is a Karush--Kuhn--Tucker point of \eqref{eq:nlp}}\}
\]
the set of all KKT points of \eqref{eq:nlp}.

\begin{definition}[Upper right Dini derivative]\cite{rockafellar1998variational}
Let \(v:[0,\infty)\to\mathbb R\) be a continuous function. The upper right Dini derivative of \(v\) at time \(t\ge 0\) is defined by
\[
D^{+}v(t)\triangleq \limsup_{h\downarrow 0}\frac{v(t+h)-v(t)}{h}.
\]
If \(v\) is differentiable at \(t\), then \(D^{+}v(t)=\dot v(t)\).
\end{definition}

\begin{definition}[Periodic orbit] \cite{mestres2026control}
Consider the system $\dot{x} = F(x)$
with flow \(\phi(\cdot,x_0)\). A nonempty set \(\Gamma\subseteq \mathcal X\) is called a \emph{periodic orbit} if there exist \(x_0\in\mathcal X\) and a minimal period \(T>0\) such that
\[
\phi(t+T,x_0)=\phi(t,x_0),
 \forall t\ge 0, \phi(t,x_0)\neq x_0, \forall t\in(0,T),
\]
and
\[
\Gamma=\{\phi(t,x_0)\mid t\in[0,T)\}.
\]
\end{definition}

\subsection{Problem Formulation}

In this paper, we consider the inequality-constrained nonlinear program
\begin{equation}\label{eq:nlp}
\begin{aligned}
\min_{x\in\mathbb R^n}\quad & f(x)\\
\text{s.t.}\quad & g(x)\le 0,
\end{aligned}
\end{equation}
where \(f:\mathbb R^n\to\mathbb R\) and \(g:\mathbb R^n\to\mathbb R^m\) are continuously differentiable. Its feasible set is
\begin{equation}\label{eq:feasible_set}
\mathcal C_g \triangleq \{x\in\mathbb R^n\mid g(x)\le 0\}.
\end{equation}

\begin{problem}[Fixed-horizon timed recovery safe gradient flow] Consider the constrained optimization problem \eqref{eq:nlp} and a prescribed horizon $\tau > 0$. Design a continuous-time dynamical system of the form
\begin{equation}
\dot{x} = -\nabla f(x) + u(x,\eta),
\end{equation}
where $u(x,\eta) \in \mathbb{R}^n$ is a pointwise computable correction term and $\eta$ is an auxiliary state, such that the following properties hold:
\begin{enumerate}
\item (\textbf{$\tau$-recurrent feasibility}) 
For every trajectory with $x(0) \in \mathcal C_g$, the state returns to $\mathcal C_g$ within every time interval of length $\tau$.

\item (\textbf{Constraint-enforcing correction}) 
The correction $u(x,\eta)$ is chosen such that the closed-loop velocity satisfies a pointwise constraint ensuring $\tau$-recurrent feasibility. 
Moreover, the correction is inactive whenever the nominal gradient direction already satisfies this constraint, i.e.,
\begin{equation}
u(x,\eta) = 0 \quad \text{whenever } -\nabla f(x) \text{ is admissible}.
\end{equation}

\item (\textbf{Minimal modification principle}) 
Among all corrections that enforce the constraint, $u(x,\eta)$ is selected to minimize the instantaneous deviation from the nominal gradient flow.

\item (\textbf{Consistency with optimization structure}) 
The resulting dynamics coincides with the nominal gradient flow on a subset of the feasible region, and its equilibrium points satisfy necessary optimality conditions consistent with the KKT structure of problem \eqref{eq:nlp}.
\end{enumerate}
\end{problem}

\section{Timed Recovery Control Barrier Function Framework}
\label{sec:rcbf}
In this section, we develop a timed recovery control barrier mechanism for systems subject to vector inequality constraints. 
This mechanism serves as the technical backbone for the subsequent sections, where it will be further integrated into the timed recovery safe gradient flow framework.
For the sake of clarity, this section considers the dynamics of general nonlinear systems \eqref{eq:general_dynamics}.

\subsection{Timed Recovery Control Barrier Function Design}
Decompose the vector-valued constraint $g(x)\le 0$ into its scalar components $g_i(x)\le 0$, for $1\le i\le m$.
Then, the inequality feasible set defined in \eqref{eq:feasible_set} becomes
\begin{equation} 
\mathcal C_g = \{x \in \mathbb{R}^n \mid g_i(x)\le 0,\ i=1,\dots,m\}.
\end{equation}
Define the aggregated inequality violation function by
\begin{equation}\label{eq:gbar_def}
\bar g(x)\triangleq \max_{1\le i\le m} g_i(x).
\end{equation}
Then $x\in \mathcal C_g$ if and only if $\bar g(x)\le 0$. For each $x\in\mathbb{R}^n$, let
\begin{equation}\label{eq:Imax_def}
\mathcal{I}_{\max}(x)\triangleq \{i\in\{1,\dots,m\}\mid g_i(x)=\bar g(x)\}
\end{equation}
denote the set of active maximizers.

To measure the duration of collective inequality violation, we introduce a scalar timer $\eta\in\mathbb{R}$ with dynamics
\begin{equation}\label{eq:eta_dyn}
\dot{\eta}=\rho(x,\eta)\triangleq
\begin{cases}
1, & \bar g(x)>0,\\
-\lambda \eta, & \bar g(x)\le 0,
\end{cases}
\quad \eta(0)=0,
\end{equation}
where \(\lambda>0\) is fixed. The timer \(\eta\) converts transient infeasibility into a continuous recovery state: it increases at unit speed whenever \(\bar g(x)>0\), so its increment over an infeasible interval equals the duration of that interval. Once \(\bar g(x)\le 0\), \(\eta\) decays exponentially rather than being reset discontinuously, with \(\lambda\) determining the rate at which past infeasibility is forgotten. In the limiting regime \(\lambda\to+\infty\), this exponential decay approaches an instantaneous timer reset after feasibility is recovered, while preserving a continuous-time formulation for every finite \(\lambda\). The following lemma records the non-negativity of \(\eta\), which will be used in the subsequent recovery-budget analysis. 

\begin{lemma}[Nonnegativity of the timer]\label{lem:eta_nonnegative}
Consider the timer defined in \eqref{eq:eta_dyn}, then $\eta(t)\ge 0$ for all $t\ge 0$.
\end{lemma}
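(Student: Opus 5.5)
We argue by contradiction using continuity of \(\eta\) and the sign structure of \(\rho\). The right-hand side \(\rho(x,\eta)\) is bounded on bounded sets, since it equals either \(1\) or \(-\lambda\eta\). Hence along any solution \(\eta(\cdot)\) is absolutely continuous, with \(\dot\eta(t)=\rho(x(t),\eta(t))\) for almost every \(t\). We interpret solutions in the Carathéodory (or Filippov) sense because \(\rho\) is discontinuous across \(\{\bar g=0\}\). The key observation is a pointwise sign property:
\[
\eta<0 \;\Longrightarrow\; \rho(x,\eta)>0 ,
\]
because \(\rho\) is either \(1>0\) or \(-\lambda\eta>0\) when \(\eta<0\).

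\emph{Step 1.} Suppose there exists \(t_1>0\) with \(\eta(t_1)<0\). Let
\[
t_0\triangleq\sup\{t\in[0,t_1]\mid \eta(t)\ge 0\}.
\]
This set contains \(0\) because \(\eta(0)=0\). By continuity, \(\eta(t_0)=0\), and \(\eta(t)<0\) for all \(t\in(t_0,t_1]\).

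\emph{Step 2.} For almost every \(t\in(t_0,t_1]\) the sign property gives \(\dot\eta(t)>0\). Integrating,
\[
\eta(t_1)=\eta(t_0)+\int_{t_0}^{t_1}\dot\eta(s)\,ds\ge 0,
\]
which contradicts \(\eta(t_1)<0\). Therefore \(\eta(t)\ge 0\) for all \(t\ge0\).

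\emph{Filippov solutions.} If solutions are understood in the Filippov sense, the set-valued map at points with \(\bar g(x)=0\) is the convex hull of \(\{1,-\lambda\eta\}\). When \(\eta<0\), every element of this set is positive, so the same integration argument applies verbatim.

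\emph{Main obstacle.} The only delicate point is the solution concept at the switching surface \(\bar g(x)=0\), where \(\rho\) is discontinuous. I will handle it by the almost-everywhere and convexification argument above rather than by classical differentiability. A comparison-lemma route, using the Dini derivative \(D^+\eta\ge\min\{1,-\lambda\eta\}\) together with zero initial data, would serve as a fallback.
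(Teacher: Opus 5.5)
Your proof is correct, but it takes a different route from the paper's.

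\textbf{The paper's route.} It sets $h(\eta)=\eta$ and checks only the boundary condition $\dot h(0)\in\{1,0\}$, so $\dot h(0)\ge 0$ at $\eta=0$. It then declares $h$ a barrier function and invokes (Nagumo-type) forward invariance of $[0,\infty)$.

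\textbf{Your route.} You use a stronger observation: $\rho(x,\eta)>0$ on the whole open half-line $\eta<0$, since both branches $1$ and $-\lambda\eta$ are positive there. You then close with a last-exit-time argument and integration of the absolutely continuous function $\eta$.

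\textbf{What each buys.} Your version is more robust. A nonstrict boundary condition $\dot h\ge 0$ at $\eta=0$ does not by itself give forward invariance of all solutions without uniqueness or Lipschitz regularity. For example, $\dot\eta=-\sqrt{|\eta|}$ satisfies the boundary condition, yet $\eta(t)=-t^2/4$ solves it from $\eta(0)=0$. Here $\rho$ is discontinuous across $\{\bar g=0\}$. Your explicit Carath\'eodory/Filippov treatment, combined with a sign condition that holds off the set and not only on its boundary, is what makes the conclusion airtight. Your Filippov remark is also right, since the $\eta$-component of the convexified field at $\bar g(x)=0$ lies in the convex hull of $\{1,-\lambda\eta\}$. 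The paper's route is shorter and fits its barrier-function narrative, but it leaves this regularity issue implicit.

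\textbf{A minor point.} Absolute continuity of $\eta$ is part of the Carath\'eodory/Filippov solution concept, not a consequence of $\rho$ being bounded on bounded sets. This does not affect your argument.
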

\begin{proof}
Let
\[
h(\eta)\triangleq \eta,
\quad
\mathcal K\triangleq \{\eta\in\mathbb R\mid h(\eta)\ge 0\}=[0,\infty).
\]
We show that $\mathcal K$ is forward invariant.

On the boundary $\partial \mathcal K=\{0\}$, one has $h(\eta)=0$. Evaluating the timer dynamics \eqref{eq:eta_dyn} at $\eta=0$, we obtain
\[
\dot h(\eta)=\dot\eta=
\begin{cases}
1, & \bar g(x)>0,\\
-\lambda \eta, & \bar g(x)\le 0,
\end{cases}
\]
and hence
\[
\dot h(0)=
\begin{cases}
1, & \bar g(x)>0,\\
0, & \bar g(x)\le 0.
\end{cases}
\]
Therefore, $\dot h(0)\ge 0$ holds in all cases. Thus $h(\eta)=\eta$ is a barrier function for the set $\mathcal K=[0,\infty)$. Consequently, $\mathcal K$ is forward invariant under \eqref{eq:eta_dyn}, that is, $\eta(t)\ge 0, \forall t\ge 0$. 
\end{proof}

Fix $\tau>0$ and $\gamma>0$. We define the timed recovery control barrier function
\begin{equation}\label{eq:B_def}
B(x,\eta)\triangleq \gamma(\tau-\eta)-\bar g(x).
\end{equation}
Thus, the condition $B(x,\eta)\ge 0$ is equivalent to
\begin{equation}\label{eq:B_geom}
\bar g(x)\le \gamma(\tau-\eta),
\end{equation}
which shows that the admissible violation margin decreases as the timer approaches $\tau$. Then, the augmented safe set is defined as
\begin{equation}\label{eq:Saug_def}
\mathcal{S}_\tau\triangleq \{(x,\eta)\in\mathbb{R}^n\times\mathbb{R}_{\ge 0}\mid B(x,\eta)\ge 0\}.
\end{equation}

We define the admissible velocity set at $(x,\eta)$ by
\begin{equation}\label{eq:Ktau_def}
\begin{aligned}
\mathcal K_\tau(x,\eta)\triangleq
\bigl\{\xi\in\mathbb R^n \mid\
&\nabla g_i(x)^\top \xi
\le -\gamma\rho(x,\eta)+\alpha(B(x,\eta)),\\
&\forall i\in\mathcal I_{\max}(x)
\bigr\}.
\end{aligned}
\end{equation}
where $\alpha:\mathbb{R}\to\mathbb{R}$ is an extended class $\mathcal{K}$ function. The inequality condition is imposed only on the active maximizers of $\bar g$. 

\begin{lemma}[Directional derivative of aggregated violation]\label{lem:gbar_directional}
The function $\bar g$ is locally Lipschitz on $\mathbb{R}^n$. Moreover, for every $x\in\mathbb{R}^n$ and every direction $\xi\in\mathbb{R}^n$, its directional derivative is given by
\begin{equation}\label{eq:gbar_dir}
D\bar g(x;\xi)=\max_{i\in\mathcal{I}_{\max}(x)} \nabla g_i(x)^\top \xi.
\end{equation}
\end{lemma}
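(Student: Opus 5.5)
The plan is to treat the two claims separately, using only that each $g_i$ is $C^1$, so each $g_i$ is locally Lipschitz with first-order expansion $g_i(x+h\xi)=g_i(x)+h\nabla g_i(x)^\top\xi+o(h)$.

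\emph{Local Lipschitz continuity.} Fix a compact neighborhood $U$ of $x$, and let $L_i$ be a Lipschitz constant of $g_i$ on $U$; one can take $L_i=\max_{U}\|\nabla g_i\|$ when $U$ is convex. For $y,z\in U$, choose an index $j$ attaining $\bar g(y)$. Then
\[
\bar g(y)-\bar g(z)\le g_j(y)-g_j(z)\le L_j\|y-z\|.
\]
Swapping the roles of $y$ and $z$ and setting $L=\max_i L_i$ gives $|\bar g(y)-\bar g(z)|\le L\|y-z\|$.

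\emph{Directional derivative.} Fix $x$ and $\xi$, and write $M=\max_{i\in\mathcal I_{\max}(x)}\nabla g_i(x)^\top\xi$. I will establish matching liminf and limsup bounds on the difference quotient $(\bar g(x+h\xi)-\bar g(x))/h$ as $h\downarrow 0$.

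For the lower bound, take any $i\in\mathcal I_{\max}(x)$. Since $\bar g(x+h\xi)\ge g_i(x+h\xi)$ and $g_i(x)=\bar g(x)$,
\[
\frac{\bar g(x+h\xi)-\bar g(x)}{h}\ge \frac{g_i(x+h\xi)-g_i(x)}{h}\to \nabla g_i(x)^\top\xi .
\]
Taking the maximum over $i$ gives a liminf of at least $M$.

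For the upper bound, the key step is that only indices in $\mathcal I_{\max}(x)$ can attain the maximum near $x$. Let $\delta=\bar g(x)-\max_{i\notin\mathcal I_{\max}(x)}g_i(x)>0$; if every index is active, nothing needs to be shown. By continuity, for all small $h>0$ every inactive $i$ satisfies $g_i(x+h\xi)<\bar g(x)-\delta/2<\bar g(x+h\xi)$. Hence $\bar g(x+h\xi)=\max_{i\in\mathcal I_{\max}(x)}g_i(x+h\xi)$ for small $h$. Subtracting $\bar g(x)=g_i(x)$ for each active $i$, the quotient equals
\[
\max_{i\in\mathcal I_{\max}(x)}\frac{g_i(x+h\xi)-g_i(x)}{h}.
\]
This is a maximum of finitely many quotients, each converging to $\nabla g_i(x)^\top\xi$. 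Since the max of finitely many functions is continuous, the quotient converges to $M$. The limit therefore exists and equals $M$, which is \eqref{eq:gbar_dir}.

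The only mildly delicate point is the localization of the active set, i.e.\ the strict gap $\delta$ together with continuity of the inactive $g_i$. This step fails without finiteness of the index set, but here $m$ is finite, so I expect no genuine obstacle.
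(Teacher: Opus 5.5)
Your proof is correct and follows the same route as the paper, which simply states that a finite maximum of $C^1$ functions is locally Lipschitz and cites the standard directional-derivative formula for such a maximum; you write out the omitted details, namely the comparison through an index attaining $\bar g(y)$ for the Lipschitz bound and the localization of $\mathcal I_{\max}(x)$ via the gap $\delta$ for the derivative. Your localization step already gives the full limit on its own, so the separate liminf bound is redundant, though harmless.
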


\begin{proof}
Since each $g_i$ is continuously differentiable and $\bar g$ is the pointwise maximum of finitely many continuously differentiable functions, $\bar g$ is locally Lipschitz. The directional derivative formula \eqref{eq:gbar_dir} is the standard directional derivative formula for the maximum of finitely many differentiable functions.
\end{proof}

\begin{remark}\label{rem:cbf_relation}
The recurrent safety requirement adopted here is strictly weaker than the standard CBF requirement of forward invariance. Indeed, any trajectory that remains in \(\mathcal C_g\) for all time is automatically \(\tau\)-recurrent for every \(\tau>0\).

Moreover, when \(x\in\mathcal C_g\) and \(\eta=0\), one has \(\rho(x,0)=0\), and the admissible velocity condition in \eqref{eq:Ktau_def} becomes
\[
\nabla g_i(x)^\top \xi \le \alpha(B(x,0)),
\quad \forall i\in\mathcal I_{\max}(x).
\]
Thus, on the feasible zero-timer section, the timed recovery control barrier condition reduces to a zeroing-CBF-type inequality, with an additional nonnegative slack determined by the recovery horizon \cite{ames2016control, ames2019control}. 
\end{remark}

\subsection{Theoretical Guarantees}
In this subsection, we establish the well-definedness and validity of our approach.
Since the timed recovery control barrier mechanism is only used on the augmented safe set \(\mathcal S_\tau\), it suffices to guarantee nonemptiness of \(\mathcal K_\tau(x,\eta)\) for states in \(\mathcal S_\tau\). This motivates the following assumption.

\begin{assumption}\label{ass:descent_direction_weak}
For every \(x\in\mathbb{R}^n\) satisfying \(\bar g(x)>0\), there exists a vector \(w(x)\in\mathbb{R}^n\) such that
\begin{equation}\label{eq:descent_direction}
\max_{i\in\mathcal{I}_{\max}(x)} \nabla g_i(x)^\top w(x)<0.
\end{equation}
\end{assumption}

\begin{theorem}[Well definedness of admissible velocity set]\label{thm:well_definedness}
Suppose Assumption \ref{ass:descent_direction_weak} holds. Then, for every \((x,\eta)\in\mathcal S_\tau\), the admissible velocity set \(\mathcal K_\tau(x,\eta)\) is nonempty. Consequently, the timed recovery control barrier condition is well defined on \(\mathcal S_\tau\).
\end{theorem}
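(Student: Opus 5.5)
We will show directly that $\mathcal K_\tau(x,\eta)$ contains an explicit vector. Fix $(x,\eta)\in\mathcal S_\tau$ and write $c\triangleq -\gamma\rho(x,\eta)+\alpha(B(x,\eta))$ for the common right-hand side in \eqref{eq:Ktau_def}; note that it does not depend on $i$. Since $(x,\eta)\in\mathcal S_\tau$, we have $B(x,\eta)\ge 0$, and hence $\alpha(B(x,\eta))\ge \alpha(0)=0$ because $\alpha$ is extended class $\mathcal K$. The argument splits according to the sign of $\bar g(x)$, which fixes the value of $\rho$ through \eqref{eq:eta_dyn}.

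\emph{Case $\bar g(x)\le 0$.} Here $\rho(x,\eta)=-\lambda\eta$, so $c=\gamma\lambda\eta+\alpha(B(x,\eta))\ge 0$ by Lemma~\ref{lem:eta_nonnegative} (equivalently, since $\eta\ge 0$ by the definition \eqref{eq:Saug_def} of $\mathcal S_\tau$). Then $\xi=0$ gives $\nabla g_i(x)^\top 0=0\le c$ for every $i\in\mathcal I_{\max}(x)$, so $0\in\mathcal K_\tau(x,\eta)$.

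\emph{Case $\bar g(x)>0$.} Here $\rho=1$, and $c=-\gamma+\alpha(B(x,\eta))$ may be negative, so $\xi=0$ need not work. Assumption~\ref{ass:descent_direction_weak} supplies $w(x)$ with
\[
\delta\triangleq -\max_{i\in\mathcal I_{\max}(x)}\nabla g_i(x)^\top w(x)>0 .
\]
We take $\xi=s\,w(x)$ with a scalar $s>0$. Then, for each $i\in\mathcal I_{\max}(x)$,
\[
\nabla g_i(x)^\top\xi\le -s\delta\le -\gamma\le c ,
\]
provided we choose $s\ge \gamma/\delta$. The linearity of the constraints in $\xi$, together with the fact that the right-hand side is a single scalar, is what makes this scaling argument work. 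Because $\mathcal I_{\max}(x)$ is finite and the bound $\delta$ is uniform over it, one $s$ serves all active indices simultaneously. Hence $\xi\in\mathcal K_\tau(x,\eta)$.

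The only step requiring care is the infeasible case, where the right-hand side can be strictly negative. There the descent direction of Assumption~\ref{ass:descent_direction_weak} must be scaled, and the key observation is that the required decrease rate is bounded by $\gamma$, independently of $\eta$, since $\alpha(B)\ge 0$ on $\mathcal S_\tau$. Combining the two cases shows that $\mathcal K_\tau(x,\eta)\neq\emptyset$ on all of $\mathcal S_\tau$. The set is also a finite intersection of closed half-spaces, hence closed and convex, which yields the claimed well-definedness of the timed recovery control barrier condition.
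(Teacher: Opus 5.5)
Your proof is correct and matches the paper's argument: both split on the sign of $\bar g(x)$, take $\xi=0$ when $\bar g(x)\le 0$, and take a scaled recovery direction $\xi=s\,w(x)$ from Assumption~\ref{ass:descent_direction_weak} when $\bar g(x)>0$. The one small difference is that you also use $\alpha(B)\ge 0$ in the infeasible case to get an explicit scale $s\ge\gamma/\delta$ that does not depend on $\eta$, whereas the paper just takes $M$ large enough that $M c(x)\le -\gamma\rho(x,\eta)+\alpha(B(x,\eta))$.
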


\begin{proof}
Fix any \((x,\eta)\in\mathcal S_\tau\). We distinguish two cases.

If \(\bar g(x)>0\), then Assumption \ref{ass:descent_direction_weak} provides a vector \(w(x)\in\mathbb{R}^n\) such that
\[
c(x)\triangleq \max_{i\in\mathcal I_{\max}(x)} \nabla g_i(x)^\top w(x)<0.
\]
Let
\[
r(x,\eta)\triangleq -\gamma \rho(x,\eta)+\alpha(B(x,\eta)).
\]
Since \(r(x,\eta)\in\mathbb R\) and \(c(x)<0\), one has \(M c(x)\to -\infty\) as \(M\to+\infty\). Therefore, there exists \(M>0\) sufficiently large such that
\[
M c(x)\le r(x,\eta).
\]
Define \(\xi=Mw(x)\). Then, for every \(i\in\mathcal I_{\max}(x)\),
\[
\begin{aligned}
   \nabla g_i(x)^\top \xi
&=
M\,\nabla g_i(x)^\top w(x)
\le
M c(x)\\
&\le
r(x,\eta)
=
-\gamma \rho(x,\eta)+\alpha(B(x,\eta)). 
\end{aligned}
\]
Hence \(\xi\in\mathcal K_\tau(x,\eta)\), and therefore \(\mathcal K_\tau(x,\eta)\neq\varnothing\).

If \(\bar g(x)\le 0\), then by the definition of \(\rho\),
\[
\rho(x,\eta)=-\lambda\eta.
\]
Moreover, since \((x,\eta)\in\mathcal S_\tau\), one has \(B(x,\eta)\ge 0\), and because \(\alpha\) is an extended class \(\mathcal K\) function,
\[
\alpha(B(x,\eta))\ge 0.
\]
It follows that
\[
-\gamma \rho(x,\eta)+\alpha(B(x,\eta))
=
\gamma\lambda\eta+\alpha(B(x,\eta))
\ge 0.
\]
Therefore, with \(\xi=0\),
\[
\nabla g_i(x)^\top \xi
=
0
\le
-\gamma \rho(x,\eta)+\alpha(B(x,\eta)), \forall i\in\mathcal I_{\max}(x),
\]
which implies \(0\in\mathcal K_\tau(x,\eta)\). Hence \(\mathcal K_\tau(x,\eta)\neq\varnothing\).

Since the two cases exhaust all possibilities, the conclusion holds for every \((x,\eta)\in\mathcal S_\tau\).
\end{proof}

\begin{remark}\label{rem:descent_direction}
Assumption \ref{ass:descent_direction_weak} is a nondegeneracy condition on the aggregated violation function \(\bar g\) over the infeasible region. By Lemma \ref{lem:gbar_directional}, it requires that whenever \(\bar g(x)>0\), there exists a direction \(w\) such that
\[
D\bar g(x;w)
=
\max_{i\in\mathcal I_{\max}(x)} \nabla g_i(x)^\top w
<0.
\]
Thus, the assumption guarantees the existence of a strict first-order recovery direction precisely when the timed recovery mechanism is active. No such condition is needed on feasible states, since for \(\bar g(x)\le 0\), the proof of Theorem \ref{thm:well_definedness} already shows that \(\xi=0\) is admissible on \(\mathcal S_\tau\).

Furthermore, when multiple constraints are simultaneously active (i.e., \(|\mathcal I_{\max}(x)| > 1\)), this condition remains valid provided that the gradients of the active constraints do not positively span the origin. In the broader context of constrained nonlinear programming, this implies that Assumption \ref{ass:descent_direction_weak} is intrinsically satisfied whenever standard constraint qualifications, such as the Mangasarian-Fromovitz Constraint Qualification (MFCQ) \cite{nocedal2006numerical}, hold at the state \(x\). Consequently, this nondegeneracy condition is widely applicable and strictly avoids being overly restrictive in practical scenarios.
\end{remark}

\begin{theorem}[Forward invariance and $\tau$-recurrence]
\label{thm:main_recurrence}
For trajectory $(x(t),\eta(t))$ generated by \eqref{eq:general_dynamics} and \eqref{eq:eta_dyn}, if $\dot x(t)\in\mathcal K_\tau(x(t),\eta(t))\neq\varnothing$ and $(x(0),\eta(0))\in\mathcal S_\tau$, then 
\begin{equation}
    (x(t),\eta(t))\in \mathcal S_\tau, \quad \forall t\ge 0.
\end{equation}
Moreover, if in addition $x(0)\in\mathcal C_g$, then for every $t\ge 0$, there exists $s\in(t,t+\tau]$ such that
$x(s)\in\mathcal C_g$. 
\end{theorem}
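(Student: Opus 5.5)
Forward invariance will be proved by a comparison argument on the scalar function \(b(t)\triangleq B(x(t),\eta(t))=\gamma(\tau-\eta(t))-\bar g(x(t))\). The nonnegativity of \(\eta\) comes directly from Lemma~\ref{lem:eta_nonnegative}, so only \(b(t)\ge 0\) needs work.

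\textbf{Forward invariance.} By Lemma~\ref{lem:gbar_directional}, \(\bar g\) is locally Lipschitz with directional derivative \(\max_{i\in\mathcal I_{\max}(x)}\nabla g_i(x)^\top\xi\). Since \(x(\cdot)\) is absolutely continuous and \(\dot x(t)\in\mathcal K_\tau(x(t),\eta(t))\), at almost every \(t\) we obtain
\[
D^+ b(t)\le -\gamma\rho(x(t),\eta(t))-\max_{i\in\mathcal I_{\max}(x(t))}\nabla g_i(x(t))^\top\dot x(t)\ge -\alpha(b(t)).
\]
More precisely, the constraint \(\nabla g_i^\top\dot x\le-\gamma\rho+\alpha(B)\) holds for all active maximizers. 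Hence \(D\bar g(x;\dot x)\le -\gamma\rho+\alpha(B)\), and therefore \(D^+b\ge-\alpha(b)\) along the trajectory. The standard comparison lemma for extended class \(\mathcal K\) functions, as in zeroing-CBF theory, then gives \(b(t)\ge 0\) for all \(t\) whenever \(b(0)\ge0\).

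The delicate point is that \(\eta\) is only piecewise differentiable, because \(\rho\) switches discontinuously at \(\bar g=0\). I would handle this by noting that \(\eta\) is Lipschitz, so \(\dot\eta=\rho\) almost everywhere in the Carathéodory sense. The a.e. inequality for the absolutely continuous function \(b\) then suffices. Alternatively, I would argue by contradiction directly: take the first time \(b\) crosses below zero and take the last zero before it. On that interval \(b<0\) and \(\dot b\ge-\alpha(b)>0\) a.e., so \(b\) would be increasing there, which contradicts the crossing.

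\textbf{\(\tau\)-recurrence.} Suppose \(x(0)\in\mathcal C_g\) and, for contradiction, \(\bar g(x(s))>0\) for all \(s\in(t,t+\tau]\). Choose \(t_0\le t\) to be the last time with \(\bar g(x(t_0))\le0\). Such a time exists by continuity, since \(x(0)\in\mathcal C_g\), and \(\bar g(x(t_0))=0\). On \((t_0,t+\tau]\) the timer obeys \(\dot\eta=1\), and \(\eta(t_0)\ge0\). It follows that \(\eta(t+\tau)\ge t+\tau-t_0\ge\tau\). Invariance gives \(\bar g(x(t+\tau))\le\gamma(\tau-\eta(t+\tau))\le0\), which contradicts infeasibility at \(t+\tau\). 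If \(t_0<t\), the bound is even strict. If \(t_0=t\), the inequality \(\eta(t+\tau)\ge\tau\) still forces \(\bar g(x(t+\tau))\le 0\).

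The main obstacle I expect is the rigorous differential inequality through the nonsmooth \(\bar g\) and the discontinuous \(\rho\). This requires the chain rule for directional derivatives of a max of \(C^1\) functions along an absolutely continuous curve. I would cite Danskin-type results for this, treating \(\mathcal I_{\max}\) at the exact time \(t\) when computing the right Dini derivative.
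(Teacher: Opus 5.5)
Your proposal is correct and follows the paper's route. For invariance, the active-maximizer constraints give $D^+B\ge-\alpha(B)$ along the trajectory, and $B\ge0$ then follows from a first-crossing argument, which you make more explicit than the paper. For recurrence, integrating $\dot\eta=1$ over an entirely infeasible window yields $\eta\ge\tau$, and invariance then forces $\bar g\le0$.

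Two small remarks. In your first display the initial relation should be the equality $D^+b=-\gamma\rho-D\bar g(x;\dot x)$, not ``$\le$''. Also, backtracking to the last feasible time $t_0$ is unnecessary: integrating over $(t,t+\tau]$ alone already gives $\eta(t+\tau)=\eta(t)+\tau\ge\tau$, so the argument does not even use $x(0)\in\mathcal C_g$.
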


\begin{proof}
We first show that the augmented safe set $\mathcal S_\tau$ is forward invariant. 
Using \eqref{eq:B_def}, the upper right Dini derivative of $B$ along the trajectory satisfies
\[
D^+B(x(t),\eta(t))
=
-\gamma \dot\eta(t)-D\bar g(x(t);\xi(t)).
\]
By Lemma \ref{lem:gbar_directional},
\[
D\bar g(x(t);\xi(t))
=
\max_{i\in\mathcal I_{\max}(x(t))}\nabla g_i(x(t))^\top \dot x(t).
\]
Since 
$\dot x(t)\in\mathcal K_\tau(x(t),\eta(t))$, one has
\[
\nabla g_i(x(t))^\top \dot x(t)
\le
-\gamma \rho(x(t),\eta(t))+\alpha(B(x(t),\eta(t)))
\]
for every $i\in\mathcal I_{\max}(x(t))$. Hence
\[
D\bar g(x(t);\xi(t))
\le
-\gamma \rho(x(t),\eta(t))+\alpha(B(x(t),\eta(t))).
\]
Using $\dot\eta(t)=\rho(x(t),\eta(t))$, we obtain
\[
\begin{aligned}
D^+B(x(t),\eta(t))
&=
-\gamma \dot\eta(t)-D\bar g(x(t);\xi(t))\\
&\ge
-\gamma \rho(x(t),\eta(t))
+\gamma \rho(x(t),\eta(t))\\
&-\alpha(B(x(t),\eta(t)))\\
&=
-\alpha(B(x(t),\eta(t))).
\end{aligned}
\]
Since $(x(0),\eta(0))\in\mathcal S_\tau$, one has $B(x(0),\eta(0))\ge 0$. Because $\alpha$ is an extended class $\mathcal K$ function, in particular $\alpha(0)=0$. Therefore, the scalar differential inequality above implies
\[
B(x(t),\eta(t))\ge 0,
\quad \forall t\ge 0.
\]
Hence
\[
(x(t),\eta(t))\in\mathcal S_\tau,
\quad \forall t\ge 0.
\]

We next prove the $\tau$-recurrence property. Since $\eta(0)=0$ by assumption and $\eta(\cdot)$ satisfies \eqref{eq:eta_dyn}, Lemma \ref{lem:eta_nonnegative} gives
\[
\eta(t)\ge 0,
\quad \forall t\ge 0.
\]

Assume now that $x(0)\in\mathcal C_g$. Suppose, by contradiction, that there exists $t_0\ge 0$ such that
\[
x(t)\notin \mathcal C_g,
\quad \forall t\in (t_0,t_0+\tau].
\]
Equivalently,
\[
\bar g(x(t))>0,
\quad \forall t\in (t_0,t_0+\tau].
\]
Then, by the timer dynamics \eqref{eq:eta_dyn},
\[
\dot\eta(t)=1,
\quad \forall t\in (t_0,t_0+\tau].
\]
Integrating over $(t_0,t_0+\tau]$ yields
\[
\eta(t_0+\tau)=\eta(t_0)+\tau\ge \tau,
\]
where the last inequality follows from $\eta(t_0)\ge 0$.

On the other hand, the forward invariance already established implies
\[
(x(t_0+\tau),\eta(t_0+\tau))\in\mathcal S_\tau,
\]
and hence
\[
0\le B(x(t_0+\tau),\eta(t_0+\tau))
=
\gamma\bigl(\tau-\eta(t_0+\tau)\bigr)-\bar g(x(t_0+\tau)).
\]
Since $\eta(t_0+\tau)\ge \tau$, it follows that
\[
\bar g(x(t_0+\tau))
\le
\gamma\bigl(\tau-\eta(t_0+\tau)\bigr)\le 0.
\]
Therefore, 
\[
x(t_0+\tau)\in\mathcal C_g,
\]
which contradicts the assumption that $x(t)\notin\mathcal C_g$ for all $t\in (t_0,t_0+\tau]$.

Hence, for every $t\ge 0$, there exists $s\in (t,t+\tau]$ such that
$x(s)\in\mathcal C_g$. This proves the $\tau$-recurrence of $\mathcal C_g$ along the closed-loop trajectory.
\end{proof}

Theorem \ref{thm:main_recurrence} shows that the proposed mechanism renders the augmented safe set forward invariant and guarantees \(\tau\)-recurrent recovery of the feasible set. This section therefore completes the constraint-level construction of the timed recovery mechanism and prepares for its incorporation into the optimization dynamics design.
\section{Constrained Nonlinear Programming via Timed Recovery Safe Gradient Flow}
\label{sec:optimization_dynamics}

In this section, we synthesize a timed recovery safe gradient flow for the inequality-constrained optimization problem \eqref{eq:nlp}. The basic idea is to interpret the unconstrained gradient descent as a nominal dynamics, and then introduce a feedback correction term so that the resulting closed-loop velocity satisfies the timed recovery control barrier condition developed in Section \ref{sec:rcbf}. In this way, the optimization objective and the recurrent safety requirement are incorporated into a unified control-theoretic framework.

We begin with the nominal gradient flow \cite{BROCKETT199179}
\begin{equation}\label{eq:nominal_gradient_flow}
\dot x = -\nabla f(x),
\end{equation}
which provides the steepest descent direction of the objective function but does not, in general, respect the recurrent safety requirement. To enforce the timed recovery control barrier condition, we introduce a correction input $u\in\mathbb R^n$, and consider the controlled system
\begin{equation}\label{eq:controlled_gradient_flow}
\dot x = -\nabla f(x)+u.
\end{equation}

Therefore, in order that the controlled dynamics satisfy the timed recovery control barrier condition, it is necessary and sufficient that
\[
-\nabla f(x)+u\in\mathcal K_\tau(x,\eta).
\]
This motivates the following admissible control set: 
\begin{equation}\label{eq:Utau_def}
\begin{aligned}
\mathcal U_\tau(x,\eta)\triangleq
\bigl\{u\in\mathbb R^n \mid\ 
&\nabla g_i(x)^\top u
\le \nabla g_i(x)^\top\nabla f(x)\\
&-\gamma\rho(x,\eta)+\alpha(B(x,\eta)),\\
&\forall i\in\mathcal I_{\max}(x)
\bigr\}.
\end{aligned}
\end{equation}
By construction,
\[
u\in\mathcal U_\tau(x,\eta)
\quad\Longleftrightarrow\quad
-\nabla f(x)+u\in\mathcal K_\tau(x,\eta).
\]

Among all admissible controls, we choose the minimum-norm one. This choice yields the least instantaneous deviation from the nominal gradient flow and therefore provides a canonical selection rule for the timed recovery safe dynamics. The timed recovery safe gradient flow is thus defined as follows.

\begin{definition}[Timed Recovery Safe Gradient Flow]\label{def:rsgf}
For each $(x,\eta)\in\mathcal S_\tau$, define the control law
\begin{equation}\label{eq:rsgf_qp_u}
\begin{aligned}
u_\tau(x,\eta)\in \arg\min_{u\in\mathbb R^n}\quad & \frac12\|u\|^2\\
\text{s.t.}\quad & u\in \mathcal U_\tau(x,\eta).
\end{aligned}
\end{equation}
The resulting timed recovery safe gradient flow is given by
\begin{subequations}\label{eq:rsgf_closed_loop}
\begin{align}
\dot x &= -\nabla f(x)+u_\tau(x,\eta),\\
\dot\eta &= \rho(x,\eta).
\end{align}
\end{subequations}
\end{definition}

The control synthesis problem \eqref{eq:rsgf_qp_u} is a convex quadratic program with linear inequality constraints. The next result shows that it is pointwise well-defined on the augmented safe set.

\begin{proposition}[Pointwise well definedness ]\label{prop:rsgf_wellposed}
Suppose Assumption \ref{ass:descent_direction_weak} holds. 
Then, for every $(x,\eta)\in\mathcal S_\tau$, the optimization problem \eqref{eq:rsgf_qp_u} admits a unique minimizer. 
\end{proposition}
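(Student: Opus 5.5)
The plan is to view \eqref{eq:rsgf_qp_u} as the Euclidean projection of the origin onto the set \(\mathcal U_\tau(x,\eta)\). Existence and uniqueness then follow once this set is shown to be nonempty, closed, and convex. The objective \(\frac12\|u\|^2\) is strongly convex and coercive, so each of these three properties must be checked for \(\mathcal U_\tau(x,\eta)\) at a fixed \((x,\eta)\in\mathcal S_\tau\).

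First, closedness and convexity. For fixed \((x,\eta)\), the index set \(\mathcal I_{\max}(x)\) is a fixed finite subset of \(\{1,\dots,m\}\). The vectors \(\nabla g_i(x)\) and the scalars
\[
b_i(x,\eta)\triangleq \nabla g_i(x)^\top\nabla f(x)-\gamma\rho(x,\eta)+\alpha(B(x,\eta))
\]
are fixed real numbers, so \(\mathcal U_\tau(x,\eta)=\{u\mid \nabla g_i(x)^\top u\le b_i(x,\eta),\ i\in\mathcal I_{\max}(x)\}\). This is a finite intersection of closed half-spaces, i.e., a closed convex polyhedron. If some \(\nabla g_i(x)=0\), the corresponding constraint reads \(0\le b_i\) and is either the whole space or empty. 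Emptiness is excluded by the next step.

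Second, nonemptiness. Here I invoke Theorem~\ref{thm:well_definedness}, which under Assumption~\ref{ass:descent_direction_weak} gives some \(\xi\in\mathcal K_\tau(x,\eta)\) for every \((x,\eta)\in\mathcal S_\tau\). By the equivalence \(u\in\mathcal U_\tau(x,\eta)\Leftrightarrow -\nabla f(x)+u\in\mathcal K_\tau(x,\eta)\) noted after \eqref{eq:Utau_def}, the choice \(u=\xi+\nabla f(x)\) lies in \(\mathcal U_\tau(x,\eta)\). This step is where the assumption is used. In particular, in the infeasible case it is what rules out contradictory half-space constraints such as \(\nabla g_i(x)=0\) with \(b_i<0\).

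Third, existence and uniqueness. Pick any \(\bar u\in\mathcal U_\tau(x,\eta)\) and restrict the minimization to the compact set \(\mathcal U_\tau(x,\eta)\cap\{\|u\|\le\|\bar u\|\}\). The continuous objective attains a minimum there, and this minimum is global over \(\mathcal U_\tau(x,\eta)\). For uniqueness, suppose \(u_1\neq u_2\) are both minimizers with value \(v\). Their midpoint is feasible by convexity, and by strict convexity of \(\|\cdot\|^2\) it satisfies \(\frac12\|\frac{u_1+u_2}{2}\|^2<v\), a contradiction. Equivalently, one may cite the projection theorem onto nonempty closed convex sets.

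There is no real obstacle: nonemptiness is the only nontrivial point, and it is delegated to Theorem~\ref{thm:well_definedness}. The one subtlety is to note that the constraint data are evaluated at the fixed point \((x,\eta)\), so \(\rho\) and \(\alpha(B)\) are constants there and the discontinuity of \(\rho\) plays no role.
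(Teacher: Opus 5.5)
Your proposal is correct and follows the paper's proof. Nonemptiness of $\mathcal U_\tau(x,\eta)$ comes from Theorem~\ref{thm:well_definedness} via the equivalence with $\mathcal K_\tau(x,\eta)$, closedness and convexity from the finite intersection of half-spaces, and uniqueness from strict convexity of $\frac12\|u\|^2$. Your compactness argument on the sublevel set also makes the existence step explicit through coercivity, which the paper leaves implicit when it cites only continuity and strict convexity.
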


\begin{proof}
Fix any $(x,\eta)\in\mathcal S_\tau$. By
Theorem \ref{thm:well_definedness},
the admissible velocity set $\mathcal K_\tau(x,\eta)$ is nonempty. 
It follows that $\mathcal U_\tau(x,\eta)$ is nonempty.

Moreover, $\mathcal U_\tau(x,\eta)$ is the intersection of finitely many closed half-spaces, and is therefore closed and convex. Since the objective function $u\mapsto \frac12\|u\|^2$ is continuous and strictly convex on $\mathbb R^n$, the optimization problem \eqref{eq:rsgf_qp_u} admits a unique minimizer.
\end{proof}

By construction, the minimum-norm control does not modify the nominal gradient flow unless such a modification is required by the timed recovery control barrier condition. Indeed, if $0\in\mathcal U_\tau(x,\eta)$, then the zero control is feasible for \eqref{eq:rsgf_qp_u}. Since the objective in \eqref{eq:rsgf_qp_u} is $\frac12\|u\|^2$, whose minimum possible value is attained at $u=0$, the unique optimizer satisfies
\[
u_\tau(x,\eta)=0.
\]
Hence, at any state $(x,\eta)\in\mathcal S_\tau$ for which the nominal velocity already satisfies the timed recovery control barrier condition, the timed recovery safe gradient flow coincides with the nominal gradient flow $\dot x=-\nabla f(x)$. This consistency property shows that the proposed control law acts only as a safety correction and remains inactive whenever the nominal descent direction is already admissible.

We now show that the control law \eqref{eq:rsgf_qp_u} indeed generates a closed-loop optimization flow that inherits the recurrent safety guarantee established in Section \ref{sec:rcbf}.

\begin{theorem}[Recurrent safety]\label{thm:rsgf_recurrent}
Let $(x(\cdot),\eta(\cdot))$ be a trajectory of \eqref{eq:rsgf_closed_loop}. If $(x(0),0)\in\mathcal S_\tau$, then $(x(t),\eta(t))\in\mathcal S_\tau$, $\forall t\ge 0$. Moreover, if $x(0)\in\mathcal C_g$, then for every $t\ge 0$, there exists $s\in (t,t+\tau]$ such that $x(s)\in\mathcal C_g$. 
\end{theorem}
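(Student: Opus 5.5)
The plan is to reduce the statement to Theorem~\ref{thm:main_recurrence}. The closed loop \eqref{eq:rsgf_closed_loop} is a special case of the general system \eqref{eq:general_dynamics} coupled with the timer \eqref{eq:eta_dyn}, with vector field $F(x,\eta)=-\nabla f(x)+u_\tau(x,\eta)$. So it suffices to check the two hypotheses of that theorem along the trajectory: the initial condition lies in $\mathcal S_\tau$, and $\dot x(t)\in\mathcal K_\tau(x(t),\eta(t))\neq\varnothing$ for all $t\ge 0$.

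The initial condition is given, since $(x(0),0)\in\mathcal S_\tau$ and $\eta(0)=0$ by \eqref{eq:eta_dyn}. For the velocity condition I will use Assumption~\ref{ass:descent_direction_weak}, as in Proposition~\ref{prop:rsgf_wellposed}, which is implicitly required for $u_\tau$ to be defined at all.

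\begin{itemize}
\item At any time $t$ with $(x(t),\eta(t))\in\mathcal S_\tau$, Proposition~\ref{prop:rsgf_wellposed} gives a unique minimizer $u_\tau(x(t),\eta(t))\in\mathcal U_\tau(x(t),\eta(t))$.
\item By the equivalence $u\in\mathcal U_\tau \Leftrightarrow -\nabla f(x)+u\in\mathcal K_\tau$, this yields $\dot x(t)\in\mathcal K_\tau(x(t),\eta(t))$.
\item Nonemptiness of $\mathcal K_\tau$ at such states is Theorem~\ref{thm:well_definedness}.
\end{itemize}

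With both hypotheses in hand, the first claim (forward invariance of $\mathcal S_\tau$) follows directly from Theorem~\ref{thm:main_recurrence}. The second claim follows from its recurrence part, using $x(0)\in\mathcal C_g$ together with $\eta(0)=0$ and Lemma~\ref{lem:eta_nonnegative}.

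The main obstacle is an apparent circularity. The control $u_\tau$ is only defined on $\mathcal S_\tau$, yet the property ``$\dot x\in\mathcal K_\tau$'' is needed precisely in order to prove that the trajectory stays in $\mathcal S_\tau$. I would resolve this by interpreting ``trajectory of \eqref{eq:rsgf_closed_loop}'' as a solution defined on an interval on which \eqref{eq:rsgf_closed_loop} makes sense, which means $(x(t),\eta(t))\in\mathcal S_\tau$ wherever $u_\tau$ is evaluated. Then I would argue by a maximal-interval argument:

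\begin{itemize}
\item Let $T^*=\sup\{T: (x(t),\eta(t))\in\mathcal S_\tau \ \forall t\in[0,T]\}$.
\item On $[0,T^*)$ the velocity constraint holds, so the comparison argument in the proof of Theorem~\ref{thm:main_recurrence} gives $B\ge 0$ there.
\item By continuity of $B$, which uses the continuity of $\bar g$ from Lemma~\ref{lem:gbar_directional}, and closedness of $\mathcal S_\tau$, the bound $B\ge0$ extends to $T^*$.
\item The solution can therefore be continued within $\mathcal S_\tau$, so $T^*$ equals the full existence interval.
\end{itemize}

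A secondary point to note is that the timer dynamics are discontinuous when $\bar g$ changes sign. Solutions should therefore be understood as absolutely continuous functions satisfying \eqref{eq:rsgf_closed_loop} almost everywhere, and the Dini-derivative comparison in Theorem~\ref{thm:main_recurrence} is applied in that sense. I will not revisit this beyond citing the earlier theorem.
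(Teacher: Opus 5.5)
Your core argument is the paper's proof: since $u_\tau(x(t),\eta(t))\in\mathcal U_\tau(x(t),\eta(t))$ is equivalent to $\dot x(t)\in\mathcal K_\tau(x(t),\eta(t))$ for a.e.\ $t$, both claims follow directly from Theorem~\ref{thm:main_recurrence}. The paper simply takes the trajectory as given on $[0,\infty)$, so you need neither of your two additions. Assumption~\ref{ass:descent_direction_weak} is unnecessary because $\dot x(t)$ itself shows that $\mathcal K_\tau$ is nonempty. The maximal-interval step is unnecessary too; under your interpretation it is tautological, and its continuation claim would in any case need an existence/viability result you do not supply, since $B\ge 0$ at $T^*$ alone does not keep the solution in $\mathcal S_\tau$ beyond $T^*$.
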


\begin{proof}
By Definition \ref{def:rsgf}, one has
\[
u_\tau(x(t),\eta(t))\in\mathcal U_\tau(x(t),\eta(t))
\quad \text{for a.e. } t\ge 0.
\]
By \eqref{eq:Utau_def}, this is equivalent to
\[
-\nabla f(x(t))+u_\tau(x(t),\eta(t))\in \mathcal K_\tau(x(t),\eta(t)) \quad \text{for a.e. } t\ge 0.
\]
Since $(x(\cdot),\eta(\cdot))$ satisfies \eqref{eq:rsgf_closed_loop}, one has
\[
\dot x(t)=-\nabla f(x(t))+u_\tau(x(t),\eta(t)),
\]
and therefore
\[
\dot x(t)\in\mathcal K_\tau(x(t),\eta(t))
\quad \text{for a.e. } t\ge 0.
\]
The conclusion follows directly from Theorem \ref{thm:main_recurrence}.
\end{proof}

Theorem \ref{thm:rsgf_recurrent} shows that the optimization dynamics generated by \eqref{eq:rsgf_qp_u}-\eqref{eq:rsgf_closed_loop} are recurrently safe. We next record a simple structural property of equilibrium points, which will be used in the subsequent compatibility and stationarity analysis.

\begin{proposition}[Structure of equilibrium points]\label{prop:rsgf_equilibrium}
Let $(x^\ast,\eta^\ast)\in\mathcal S_\tau$ be an equilibrium point of \eqref{eq:rsgf_closed_loop}. Then
\begin{equation}\label{eq:eq_structure}
\eta^\ast=0,\quad \bar g(x^\ast)\le 0,
\end{equation}
and hence $x^\ast\in\mathcal C_g$. Moreover,
\begin{equation}\label{eq:eq_balance}
u_\tau(x^\ast,\eta^\ast)=\nabla f(x^\ast).
\end{equation}
\end{proposition}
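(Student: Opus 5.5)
The plan is to read off the equilibrium conditions $\dot x=0$ and $\dot\eta=0$ in \eqref{eq:rsgf_closed_loop} and analyze the timer equation first. At an equilibrium, $\rho(x^\ast,\eta^\ast)=0$. By the definition \eqref{eq:eta_dyn}, $\rho$ takes only the values $1$ (when $\bar g(x)>0$) and $-\lambda\eta$ (when $\bar g(x)\le 0$). The first case is excluded, since $1\neq 0$. Hence $\bar g(x^\ast)\le 0$, and then $-\lambda\eta^\ast=0$ with $\lambda>0$ forces $\eta^\ast=0$. This gives \eqref{eq:eq_structure}, and $x^\ast\in\mathcal C_g$ follows immediately from the equivalence $x\in\mathcal C_g\iff\bar g(x)\le 0$ noted after \eqref{eq:gbar_def}.

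For \eqref{eq:eq_balance}, the plan is to use the $x$-equation $0=-\nabla f(x^\ast)+u_\tau(x^\ast,\eta^\ast)$. This requires $u_\tau(x^\ast,\eta^\ast)$ to be a well-defined single vector. Since $(x^\ast,\eta^\ast)\in\mathcal S_\tau$, Proposition \ref{prop:rsgf_wellposed} (under Assumption \ref{ass:descent_direction_weak}, implicitly in force) provides a unique minimizer of \eqref{eq:rsgf_qp_u}. Rearranging then gives $u_\tau(x^\ast,\eta^\ast)=\nabla f(x^\ast)$.

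As a consistency check, I would note that this vector is admissible. With $\eta^\ast=0$ and $\bar g(x^\ast)\le0$ we have $\rho=0$ and $B(x^\ast,0)=\gamma\tau-\bar g(x^\ast)\ge\gamma\tau>0$. The constraint in \eqref{eq:Utau_def} at $u=\nabla f(x^\ast)$ therefore reads $0\le\alpha(B)$, which holds. Nothing here is a real obstacle. The only delicate point is the bookkeeping of the case split in $\rho$: the infeasible branch must be excluded because it has constant value $1$, not because of any property of $x^\ast$.
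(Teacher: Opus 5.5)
Your proposal is correct and follows the paper's proof essentially verbatim: the $x$-equation gives $u_\tau(x^\ast,\eta^\ast)=\nabla f(x^\ast)$, and the timer equation rules out the branch $\rho=1$, after which $-\lambda\eta^\ast=0$ forces $\eta^\ast=0$ (you simply treat the two equations in the opposite order). Your remarks on uniqueness of the QP minimizer and on admissibility of $u=\nabla f(x^\ast)$ are correct extras that the paper does not spell out.
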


\begin{proof}
Since $(x^\ast,\eta^\ast)$ is an equilibrium point of \eqref{eq:rsgf_closed_loop}, one has
\[
0=\dot x=-\nabla f(x^\ast)+u_\tau(x^\ast,\eta^\ast),
\]
which yields
\[
u_\tau(x^\ast,\eta^\ast)=\nabla f(x^\ast).
\]

It remains to analyze the timer equation. If $\bar g(x^\ast)>0$, then
\[
\dot\eta=\rho(x^\ast,\eta^\ast)=1,
\]
which contradicts the equilibrium condition $\dot\eta=0$. Hence $\bar g(x^\ast)\le 0$. In this case,
\[
0=\dot\eta=\rho(x^\ast,\eta^\ast)=-\lambda\eta^\ast.
\]
Since $\lambda>0$, it follows that $\eta^\ast=0$. Therefore $\bar g(x^\ast)\le 0$ and $\eta^\ast=0$, which proves \eqref{eq:eq_structure}. Because $x^\ast\in\mathcal C_g$ is equivalent to $\bar g(x^\ast)\le 0$, the proof is complete.
\end{proof}

Proposition \ref{prop:rsgf_equilibrium} shows that every equilibrium point of the timed recovery safe gradient flow lies on the feasible zero-timer section of the augmented safe set. Therefore, the equilibrium analysis of the closed-loop dynamics reduces to the pointwise optimality properties of the control synthesis problem \eqref{eq:rsgf_qp_u} evaluated at states satisfying $x\in\mathcal C_g$ and $\eta=0$. This observation provides the bridge from the feedback design developed here to the KKT analysis in the next section.

\section{System Behavior Analysis Under Fixed horizon}
\label{sec:system_behavior}
This section analyzes the qualitative behavior of the timed recovery safe gradient flow \eqref{eq:rsgf_closed_loop} for fixed horizon. The first subsection studies how the timer variable governs the alternation between descent and recovery, and the second examines compatibility with the nominal gradient flow together with the role of \(\tau\). Throughout, let \((x(\cdot),\eta(\cdot))\) be a trajectory of \eqref{eq:rsgf_closed_loop} with \((x(0),0)\in\mathcal S_\tau\). 

\subsection{Qualitative System Behavior Under Fixed horizon}

\begin{proposition}
\label{prop:timer_phase_dynamics}
Suppose Assumption \ref{ass:descent_direction_weak} holds, and let $(x(\cdot),\eta(\cdot))$ be an absolutely continuous trajectory of \eqref{eq:rsgf_closed_loop} with $(x(0),0)\in\mathcal S_\tau$. Then the following statements hold.

\begin{enumerate}
\item For all $t\ge 0$,
\[
0\le \eta(t)\le \tau,
\quad
\bar g(x(t))\le \gamma\bigl(\tau-\eta(t)\bigr)\le \gamma\tau.
\]
In particular, $\bar g(x(t))>0$ implies $\eta(t)<\tau$.

\item Let $[t_a,t_b]\subset[0,\infty)$. If $\bar g(x(t))\le 0$ for all $t\in[t_a,t_b]$, then
\[
\bar g(x(t))
\le
\gamma\Bigl(\tau-\eta(t_a)e^{-\lambda(t-t_a)}\Bigr),
\quad t\in[t_a,t_b].
\]

\item Let $[t_a,t_b]\subset[0,\infty)$. If $\bar g(x(t))>0$ for all $t\in[t_a,t_b]$, then
\[
\bar g(x(t))
\le
\gamma\Bigl(\tau-\eta(t_a)-(t-t_a)\Bigr),
\quad t\in[t_a,t_b],
\]
and consequently $t_b-t_a<\tau-\eta(t_a)$.
\end{enumerate}
\end{proposition}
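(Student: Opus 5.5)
The plan is to derive all three statements from two facts already available: the forward invariance of \(\mathcal S_\tau\) given by Theorem~\ref{thm:rsgf_recurrent}, and the explicit timer dynamics \eqref{eq:eta_dyn}. Since \((x(0),0)\in\mathcal S_\tau\), Theorem~\ref{thm:rsgf_recurrent} gives \(B(x(t),\eta(t))\ge 0\) for all \(t\ge 0\). By \eqref{eq:B_geom} this is the same as \(\bar g(x(t))\le\gamma(\tau-\eta(t))\). Lemma~\ref{lem:eta_nonnegative} gives \(\eta(t)\ge 0\), so \(\gamma(\tau-\eta(t))\le\gamma\tau\).

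For the upper bound \(\eta(t)\le\tau\), I argue on the sign of \(\bar g\).
\begin{itemize}
\item If \(\bar g(x(t))>0\), then \(0<\gamma(\tau-\eta(t))\), so \(\eta(t)<\tau\). This also gives the ``in particular'' claim.
\item If \(\bar g(x(t))\le 0\), the invariance inequality alone does not bound \(\eta\). I instead use a maximality argument. Suppose \(\eta(t_1)>\tau\) for some \(t_1\). Let \(t_0=\sup\{s\le t_1:\eta(s)\le\tau\}\), which is well defined since \(\eta(0)=0\). By continuity \(\eta(t_0)=\tau\) and \(\eta>\tau\) on \((t_0,t_1]\).
\item On \((t_0,t_1]\) the first bullet forces \(\bar g\le 0\), so \(\dot\eta=-\lambda\eta<0\) a.e. Integrating the absolutely continuous \(\eta\) gives \(\eta(t_1)\le\eta(t_0)=\tau\), a contradiction.
\end{itemize}
I expect this sup-argument to be the only delicate point. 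The care needed is in handling the a.e. derivative through absolute continuity.

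For item 2, on \([t_a,t_b]\) we have \(\dot\eta=-\lambda\eta\) for a.e. \(t\), with \(\eta\) absolutely continuous. Uniqueness for this linear ODE gives \(\eta(t)=\eta(t_a)e^{-\lambda(t-t_a)}\). Substituting into \(\bar g(x(t))\le\gamma(\tau-\eta(t))\) yields the stated bound.

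For item 3, on \([t_a,t_b]\) we have \(\dot\eta=1\) a.e., so \(\eta(t)=\eta(t_a)+(t-t_a)\). Substituting into the invariance inequality gives the bound. Evaluating at \(t=t_b\) then gives
\[
0<\bar g(x(t_b))\le\gamma\bigl(\tau-\eta(t_a)-(t_b-t_a)\bigr).
\]
Dividing by \(\gamma>0\) yields the strict inequality \(t_b-t_a<\tau-\eta(t_a)\).
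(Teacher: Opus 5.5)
Your proof is correct and follows the paper's argument. Both use forward invariance of $\mathcal S_\tau$ together with Lemma~\ref{lem:eta_nonnegative}, the fact that $\eta\ge\tau$ forces $\bar g\le 0$ and hence timer decay, and substitution of the explicit timer solutions on feasible and infeasible segments into $B\ge 0$. Your supremum argument for $\eta\le\tau$ is a more careful version of the paper's boundary-point reasoning: it integrates the a.e.\ derivative over an interval, instead of relying on $\dot\eta<0$ at the single level $\eta=\tau$ for dynamics that are discontinuous in $x$.
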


\begin{proof}
By Theorem \ref{thm:rsgf_recurrent}, one has $(x(t),\eta(t))\in\mathcal S_\tau$ for all $t\ge 0$. Since $\eta(0)=0$, Lemma \ref{lem:eta_nonnegative} yields $\eta(t)\ge 0$ for all $t\ge 0$.

To prove the upper bound $\eta(t)\le \tau$, consider the boundary $\eta=\tau$. If $(x,\tau)\in\mathcal S_\tau$, then
\[
0\le B(x,\tau)=\gamma(\tau-\tau)-\bar g(x)=-\bar g(x),
\]
so $\bar g(x)\le 0$. Hence, by \eqref{eq:eta_dyn}, one has $\dot\eta=-\lambda\tau<0$ at every such boundary point. Since $\eta(0)=0<\tau$, it follows that $\eta(t)\le \tau$ for all $t\ge 0$. Therefore $0\le \eta(t)\le \tau$ for all $t\ge 0$.

Again from $(x(t),\eta(t))\in\mathcal S_\tau$, one has
\[
0\le B(x(t),\eta(t))
=
\gamma\bigl(\tau-\eta(t)\bigr)-\bar g(x(t)),
\]
which gives
\[
\bar g(x(t))\le \gamma\bigl(\tau-\eta(t)\bigr)\le \gamma\tau,
\quad \forall t\ge 0.
\]
In particular, if $\bar g(x(t))>0$, then necessarily $\eta(t)<\tau$.

Now let $[t_a,t_b]\subset[0,\infty)$. If $\bar g(x(t))\le 0$ for all $t\in[t_a,t_b]$, then \eqref{eq:eta_dyn} reduces to $\dot\eta(t)=-\lambda\eta(t)$, so
\[
\eta(t)=\eta(t_a)e^{-\lambda(t-t_a)},
\quad \forall t\in[t_a,t_b].
\]
Substituting this into $\bar g(x(t))\le \gamma(\tau-\eta(t))$ proves the feasible-segment estimate.

If $\bar g(x(t))>0$ for all $t\in[t_a,t_b]$, then \eqref{eq:eta_dyn} gives $\dot\eta(t)=1$, hence
\[
\eta(t)=\eta(t_a)+(t-t_a),
\quad \forall t\in[t_a,t_b].
\]
Substituting this into $\bar g(x(t))\le \gamma(\tau-\eta(t))$ yields
\[
\bar g(x(t))
\le
\gamma\Bigl(\tau-\eta(t_a)-(t-t_a)\Bigr),
\quad \forall t\in[t_a,t_b].
\]
Since $\bar g(x(t_b))>0$, one has $\eta(t_b)<\tau$, and therefore
\[
\eta(t_a)+(t_b-t_a)=\eta(t_b)<\tau.
\]
Thus $t_b-t_a<\tau-\eta(t_a)$, as claimed.
\end{proof}

Proposition \ref{prop:timer_phase_dynamics} describes the segmentwise evolution of the timer state and the associated recovery margin. In particular, along infeasible segments, the remaining margin \(\tau-\eta\) decreases monotonically, so the timed recovery constraint becomes progressively more restrictive. The following lemma makes this effect precise by showing how the tightening of the constraint yields a restorative bound on the instantaneous evolution of \(\bar g\).

\begin{lemma}[Restorative mechanism of timed recovery CBF]
\label{prop:restorative_mechanism}
For the closed-loop system \eqref{eq:rsgf_closed_loop}, the following statements hold for almost every $t\ge 0$.

\begin{enumerate}
\item One has
\[
D^+\bar g(x(t))
\le
-\gamma \rho(x(t),\eta(t))+\alpha(B(x(t),\eta(t))).
\]\label{restorative_mechanism_item1}

\item If $\bar g(x(t))>0$, then
\[
D^+\bar g(x(t))
\le
-\gamma+\alpha(B(x(t),\eta(t))).
\]
Moreover, if $\alpha(B(x(t),\eta(t)))<\gamma$, then $D^+\bar g(x(t))<0$. In particular, if $B(x(t),\eta(t))=0$ and $\bar g(x(t))>0$, then
\[
D^+\bar g(x(t))\le -\gamma<0.
\]\label{restorative_mechanism_item2}

\item If $x(t)\in\partial\mathcal C_g$ and $\eta(t)=0$, then $D^+\bar g(x(t))\le \alpha(\gamma\tau)$. \label{restorative_mechanism_item3}
\end{enumerate}
\end{lemma}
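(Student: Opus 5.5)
The plan is to derive all three items from one pointwise inequality: the closed-loop velocity lies in the admissible velocity set. By Definition~\ref{def:rsgf} and \eqref{eq:Utau_def}, for almost every $t\ge 0$ we have
\[
\dot x(t)=-\nabla f(x(t))+u_\tau(x(t),\eta(t))\in\mathcal K_\tau(x(t),\eta(t)).
\]
This is exactly the step used in the proof of Theorem~\ref{thm:rsgf_recurrent}. The trajectory is absolutely continuous and $\bar g$ is locally Lipschitz by Lemma~\ref{lem:gbar_directional}. Hence, at every $t$ where $\dot x(t)$ exists, I will identify the upper right Dini derivative of $t\mapsto\bar g(x(t))$ with the directional derivative $D\bar g(x(t);\dot x(t))$. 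This identification holds for a locally Lipschitz, directionally differentiable function composed with a curve that is differentiable at $t$: write $x(t+h)=x(t)+h\dot x(t)+o(h)$ and use the Lipschitz constant to absorb the $o(h)$ term.

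Then Lemma~\ref{lem:gbar_directional} gives
\[
D^+\bar g(x(t))=\max_{i\in\mathcal I_{\max}(x(t))}\nabla g_i(x(t))^\top\dot x(t).
\]
Each term on the right is bounded by $-\gamma\rho(x(t),\eta(t))+\alpha(B(x(t),\eta(t)))$, by the definition \eqref{eq:Ktau_def} of $\mathcal K_\tau$. This proves item~1. The minimizer $u_\tau$ is well defined along the trajectory by Proposition~\ref{prop:rsgf_wellposed}, and the trajectory remains in $\mathcal S_\tau$ by Theorem~\ref{thm:rsgf_recurrent}, so the membership holds wherever the derivative exists.

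Items~2 and~3 are specializations of item~1.

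\textbf{Item~2.} If $\bar g(x(t))>0$, then $\rho=1$ by \eqref{eq:eta_dyn}, which gives $D^+\bar g\le-\gamma+\alpha(B)$.
\begin{itemize}
\item Strict negativity follows immediately when $\alpha(B)<\gamma$.
\item When $B=0$, we have $\alpha(0)=0$, which gives the bound $-\gamma$.
\end{itemize}

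\textbf{Item~3.} If $x(t)\in\partial\mathcal C_g$ and $\eta(t)=0$, then continuity of $g$ gives $\bar g(x(t))=0$, so $\rho=-\lambda\cdot 0=0$. Moreover $B(x(t),0)=\gamma\tau$, and item~1 yields $D^+\bar g\le\alpha(\gamma\tau)$.

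The main obstacle is the measure-theoretic bookkeeping in the first step. The QP membership and the timer dynamics hold only almost everywhere, since $\rho$ is discontinuous across $\partial\mathcal C_g$. The Dini-derivative identity must therefore be justified at the full-measure set of times where $x$ is differentiable and the closed-loop equation holds. I would state the lemma on that set, as the ``almost every $t$'' qualifier allows. The chain-rule argument above is the only nontrivial ingredient.
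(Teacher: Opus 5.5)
Your proposal is correct and follows the paper's proof essentially step for step: the a.e.\ membership $\dot x(t)\in\mathcal K_\tau(x(t),\eta(t))$, the identification $D^+\bar g(x(t))=\max_{i\in\mathcal I_{\max}(x(t))}\nabla g_i(x(t))^\top\dot x(t)$, and then substituting $\rho=1$ for item 2 and $\rho=0$, $B=\gamma\tau$ for item 3. The only difference is that you justify two steps the paper merely asserts: the Dini-derivative/directional-derivative identity, via Lipschitz absorption of the $o(h)$ term, and the fact that $\bar g=0$ on $\partial\mathcal C_g$, by continuity of $g$.
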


\begin{proof}
For the closed-loop system \eqref{eq:rsgf_closed_loop}, one has for almost every $t\ge 0$,
\[
\dot x(t)=-\nabla f(x(t))+u_\tau(x(t),\eta(t))
\in \mathcal K_\tau(x(t),\eta(t)).
\]
By Lemma \ref{lem:gbar_directional},
\[
D^+\bar g(x(t))
=
D\bar g(x(t);\dot x(t))
=
\max_{i\in\mathcal I_{\max}(x(t))}\nabla g_i(x(t))^\top \dot x(t).
\]
Since $\dot x(t)\in\mathcal K_\tau(x(t),\eta(t))$, the defining inequality of $\mathcal K_\tau$ yields
\[
\nabla g_i(x(t))^\top \dot x(t)
\le
-\gamma\rho(x(t),\eta(t))+\alpha(B(x(t),\eta(t)))
\]
for every $i\in\mathcal I_{\max}(x(t))$. Taking the maximum over $i\in\mathcal I_{\max}(x(t))$ proves item \ref{restorative_mechanism_item1}.

If $\bar g(x(t))>0$, then by the definition of $\rho$, one has $\rho(x(t),\eta(t))=1$. Substituting this into item \ref{restorative_mechanism_item1} gives
\[
D^+\bar g(x(t))
\le
-\gamma+\alpha(B(x(t),\eta(t))),
\]
which proves the first claim in item \ref{restorative_mechanism_item2}. If in addition $\alpha(B(x(t),\eta(t)))<\gamma$, then
\[
-\gamma+\alpha(B(x(t),\eta(t)))<0,
\]
and hence $D^+\bar g(x(t))<0$. Finally, if $B(x(t),\eta(t))=0$ and $\bar g(x(t))>0$, then $\alpha(B(x(t),\eta(t)))=\alpha(0)=0$, so
\[
D^+\bar g(x(t))\le -\gamma<0.
\]

For item \ref{restorative_mechanism_item3}, if $x(t)\in\partial\mathcal C_g$ and $\eta(t)=0$, then $\bar g(x(t))=0$, and therefore $\rho(x(t),0)=0$. Moreover,
\[
B(x(t),0)=\gamma\tau-\bar g(x(t))=\gamma\tau.
\]
Substituting these identities into the estimate in item \ref{restorative_mechanism_item1} yields
\[
D^+\bar g(x(t))\le \alpha(\gamma\tau).
\]
This proves item \ref{restorative_mechanism_item3}.
\end{proof}

Lemma \ref{prop:restorative_mechanism} shows that the fixed horizon design permits temporary exits from \(\mathcal C_g\), since \(D^+\bar g(x)\) is not required to be nonpositive on \(\partial\mathcal C_g\). However, this loss of feasibility is quantitatively regulated: once \(\bar g(x)>0\), the tightening of the timed recovery constraint eventually forces \(D^+\bar g(x)<0\) when the remaining recurrence budget becomes small enough. Hence the mechanism does not prevent boundary crossing, but it does enforce restorative motion before the recovery budget is exhausted. 

\begin{remark}
\label{rem:objective_variation_revised}
For the closed-loop system \eqref{eq:rsgf_closed_loop}, one has for almost every \(t\ge 0\),
\begin{equation}\label{eq:f_dot_revised}
\frac{d}{dt}f(x(t))
=
-\|\nabla f(x(t))\|^2+\nabla f(x(t))^\top u_\tau(x(t),\eta(t)).
\end{equation}
Hence, under the fixed horizon timed recovery mechanism, the objective function \(f\) is generally not monotone along trajectories, since the sign of \(\nabla f(x)^\top u_\tau(x,\eta)\) is not determined a priori. Therefore, unlike in the nominal gradient flow, \(f\) alone cannot be used to certify monotone descent or serve as a Lyapunov function for the augmented closed-loop dynamics.
\end{remark}

Now proceed to characterize the invariant structures of the system. 
\begin{proposition}[Necessary structure of periodic orbits]
\label{prop:periodic_orbit_structure}
Let $\Gamma\subset \mathcal S_\tau$ be a periodic orbit of \eqref{eq:rsgf_closed_loop}. Assume that
\[
\Gamma\not\subset \mathcal C_g\times\{0\}.
\]
Then $\Gamma$ intersects both
\[
A\triangleq \{(x,\eta)\in\mathcal S_\tau \mid \bar g(x)>0\}
\]
and
\[
B\triangleq \{(x,\eta)\in\mathcal S_\tau \mid \bar g(x)\le 0,\ \eta>0\}.
\]
Consequently, $\Gamma$ intersects the switching surface
\[
\{(x,\eta)\in\mathcal S_\tau \mid \bar g(x)=0\},
\]
and each full period contains an infeasible phase together with a feasible recovery phase.
\end{proposition}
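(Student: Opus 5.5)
The plan is to argue by contradiction on each of the two intersection claims, using only the timer dynamics \eqref{eq:eta_dyn} along the periodic trajectory. Fix $x_0$ and the minimal period $T>0$ with $\Gamma=\{\phi(t,x_0)\mid t\in[0,T)\}$, and write $(x(t),\eta(t))$ for the periodic trajectory, so that $\eta(t+T)=\eta(t)$ and $x(t+T)=x(t)$.

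\emph{Step 1: $\Gamma$ meets $A$.} Suppose instead that $\bar g(x(t))\le 0$ for all $t\in[0,T]$. Then $\dot\eta=-\lambda\eta$ on the whole period, so $\eta(T)=\eta(0)e^{-\lambda T}$. Periodicity $\eta(T)=\eta(0)$ together with $\lambda T>0$ forces $\eta(0)=0$, and hence $\eta\equiv 0$ on the orbit. Thus $\Gamma\subset\mathcal C_g\times\{0\}$, which contradicts the hypothesis.

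\emph{Step 2: $\Gamma$ meets $B$.} Suppose instead that no point of $\Gamma$ has $\bar g(x)\le 0$ and $\eta>0$. Split the period into times where $\bar g>0$ and times where $\bar g\le 0$. On the latter set, $\eta=0$ by assumption. On the former set, $\dot\eta=1$. By Step 1 the set $\{t:\bar g(x(t))>0\}$ is nonempty. It is also open, since $\bar g\circ x$ is continuous. So it contains an interval on which $\eta$ strictly increases.

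To obtain a contradiction, I would use that $\eta$ is absolutely continuous and periodic, which gives
\[
0=\eta(T)-\eta(0)=\int_0^T\rho\,dt=\bigl|\{t\in[0,T]:\bar g>0\}\bigr|+\int_{\{\bar g\le 0\}}(-\lambda\eta)\,dt .
\]
The second term vanishes because $\eta=0$ there. The first term is positive, since it is the measure of a nonempty open set. This is a contradiction, so $\Gamma$ meets $B$. There is a degenerate case where $\bar g>0$ on the entire period, so the feasible set is empty. That case is covered by the same identity, or alternatively by Proposition~\ref{prop:timer_phase_dynamics}(3): an infeasible interval has length less than $\tau$, while periodicity would make it unbounded.

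\emph{Step 3: switching surface and phase structure.} The function $t\mapsto\bar g(x(t))$ is continuous. By Steps 1 and 2 it takes a positive value and a nonpositive value on each period, so the intermediate value theorem gives a time with $\bar g(x(t))=0$. That point of $\Gamma$ lies in $\mathcal S_\tau$ and on the switching surface. Because the orbit is traversed once per period, each full period contains both an infeasible phase (points of $A$) and a feasible recovery phase with $\eta>0$ decaying (points of $B$).

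\emph{Main obstacle.} I expect the main difficulty to be rigor in Step 2 regarding the regularity of the closed loop. The right-hand side $\rho$ is discontinuous across $\bar g=0$, so I would work with $\eta$ as an absolutely continuous (Carathéodory) solution and use only the integral identity above. That identity needs no smoothness beyond measurability of $\rho(x(t),\eta(t))$, which follows because $\bar g\circ x$ is continuous.
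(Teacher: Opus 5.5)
Your proof is correct. Its skeleton is close to the paper's; the one step that genuinely differs is how you show $\Gamma$ meets $B$.

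\textbf{Step 1.} This is the paper's ``$B$ implies $A$'' argument: on a fully feasible orbit, exponential decay of $\eta$ contradicts periodicity. You aim it directly at the hypothesis $\Gamma\not\subset\mathcal C_g\times\{0\}$. The paper instead first notes $\Gamma\cap(A\cup B)\neq\varnothing$ and then splits into cases.

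\textbf{Step 2: the paper's route.} The paper picks $t_a$ with a point in $A$ and argues the orbit cannot remain in $A$. It then takes the first later time $t_b$ with $\bar g(x(t_b))\le 0$ and reads off $\eta(t_b)=\eta(t_a)+(t_b-t_a)>0$, using $\eta\ge 0$ on $\mathcal S_\tau$. This is elementary and constructive. The witness it produces even lies on the switching surface, since $\bar g(x(t_b))=0$ by continuity.

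\textbf{Step 2: your route.} You integrate $\dot\eta=\rho$ over one period instead. If you use that identity without the contradiction hypothesis, it reads
\[
\bigl|\{t\in[0,T]:\bar g(x(t))>0\}\bigr|=\lambda\int_{\{t\in[0,T]:\,\bar g(x(t))\le 0\}}\eta(t)\,dt .
\]
This says more than the paper's argument. The time spent infeasible in each period must be exactly paid back by timer decay on the feasible portion. Since the left side is positive by Step 1, the set of times in $B$ has positive measure. That is a quantitative version of the final ``feasible recovery phase'' conclusion, which the paper infers from the mere existence of a point of $B$. Your explicit Carath\'eodory bookkeeping (under the contradiction hypothesis, $\eta=0$ and hence $\rho=0$ on the feasible times) also makes the regularity issue transparent.

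\textbf{A small wording slip.} In your degenerate case, $\bar g>0$ along the whole period means the orbit never enters $\mathcal C_g$, not that $\mathcal C_g$ is empty. Your identity disposes of that case anyway, since it gives $0=T$.
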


\begin{proof}
Since $\Gamma\subset \mathcal S_\tau$ and $\Gamma\not\subset \mathcal C_g\times\{0\}$, there exists $(x,\eta)\in\Gamma$ such that either $\bar g(x)>0$ or $\eta>0$. Therefore,
\[
\Gamma\cap (A\cup B)\neq \varnothing.
\]

We first show that $\Gamma\cap A\neq \varnothing$ implies $\Gamma\cap B\neq \varnothing$. Let $(x_\Gamma(\cdot),\eta_\Gamma(\cdot))$ be a periodic parametrization of $\Gamma$, and choose $t_a$ such that
\[
(x_\Gamma(t_a),\eta_\Gamma(t_a))\in A.
\]
If $\Gamma\subset A$, then $\bar g(x_\Gamma(t))>0$ for all $t$, and hence $\dot\eta_\Gamma(t)=1$ along the entire orbit by \eqref{eq:eta_dyn}. This is impossible for a periodic orbit, since $\eta_\Gamma$ would then be strictly increasing. Therefore $\Gamma\not\subset A$, so there exists a first time $t_b>t_a$ such that
\[
\bar g(x_\Gamma(t_b))\le 0, 
\quad
\bar g(x_\Gamma(t))>0 \ \text{for all } t\in [t_a,t_b).
\]
On $[t_a,t_b)$, one has $\dot\eta_\Gamma(t)=1$, and therefore
\[
\eta_\Gamma(t_b)=\eta_\Gamma(t_a)+(t_b-t_a)>0.
\]
Since also $\bar g(x_\Gamma(t_b))\le 0$, it follows that $(x_\Gamma(t_b),\eta_\Gamma(t_b))\in B$. Hence $\Gamma\cap B\neq \varnothing$.

We next show that $\Gamma\cap B\neq \varnothing$ implies $\Gamma\cap A\neq \varnothing$. Suppose, by contradiction, that $\Gamma\cap B\neq \varnothing$ and $\Gamma\cap A=\varnothing$. Then $\bar g(x)\le 0$ for every $(x,\eta)\in\Gamma$, so the whole orbit is feasible. Choose $t_c$ such that
\[
(x_\Gamma(t_c),\eta_\Gamma(t_c))\in B.
\]
Then $\eta_\Gamma(t_c)>0$, and since the orbit is feasible everywhere, the timer dynamics reduce to $\dot\eta_\Gamma(t)=-\lambda \eta_\Gamma(t)$ for all $t$. 
Hence
\[
\eta_\Gamma(t)=\eta_\Gamma(t_c)e^{-\lambda (t-t_c)}
\quad \forall t\ge t_c.
\]
In particular, $\eta_\Gamma(t)>0$ for all $t\ge t_c$, and $\eta_\Gamma$ is strictly decreasing. This contradicts periodicity. Therefore $\Gamma\cap A\neq \varnothing$.

We have thus proved that $\Gamma$ intersects both $A$ and $B$. Since the map
\[
t\mapsto \bar g(x_\Gamma(t))
\]
is continuous, any transition between the regions $\{\bar g>0\}$ and $\{\bar g\le 0\}$ must pass through $\{\bar g=0\}$. Hence $\Gamma$ intersects the switching surface
\[
\{(x,\eta)\in\mathcal S_\tau \mid \bar g(x)=0\}.
\]

Finally, because $\Gamma\cap A\neq \varnothing$, each period contains a nontrivial interval on which $\bar g(x_\Gamma(t))>0$, namely an infeasible phase. Likewise, because $\Gamma\cap B\neq \varnothing$, each period also contains a nontrivial interval on which $\bar g(x_\Gamma(t))\le 0$ and $\eta_\Gamma(t)>0$, namely a feasible recovery phase. This proves the final statement.
\end{proof}

Having characterized the necessary structure of nontrivial periodic behavior, we now examine the opposite regime of stationary solutions. The next result shows that, on the feasible interior where the correction remains inactive, interior equilibria of the fixed horizon closed-loop system are exactly the stationary, equivalently KKT, points of the original problem.

\begin{proposition}
\label{prop:fixed_tau_interior_kkt}
Assume that $0\in \mathcal U_\tau(x,0)$ for all $x\in \operatorname{int}(\mathcal C_g)$. Then, for every $x^\star\in \operatorname{int}(\mathcal C_g)$, the following statements are equivalent:
\begin{enumerate}
\item $(x^\star,0)$ is an equilibrium of \eqref{eq:rsgf_closed_loop}.
\item $\nabla f(x^\star)=0$.
\item $x^\star\in X_{\mathrm{KKT}}$.
\end{enumerate}
\end{proposition}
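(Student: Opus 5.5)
The plan is to prove the cycle of implications (1)$\Rightarrow$(2)$\Rightarrow$(3)$\Rightarrow$(2)$\Rightarrow$(1). The two ingredients are the minimum-norm consistency property recorded after Proposition~\ref{prop:rsgf_wellposed} and complementary slackness in \eqref{eq:kkt}. First note that for $x^\star\in\operatorname{int}(\mathcal C_g)$ we have $(x^\star,0)\in\mathcal S_\tau$. Indeed, $B(x^\star,0)=\gamma\tau-\bar g(x^\star)\ge\gamma\tau>0$, so the QP \eqref{eq:rsgf_qp_u} has a unique minimizer at $(x^\star,0)$. The standing hypothesis $0\in\mathcal U_\tau(x^\star,0)$ then forces $u_\tau(x^\star,0)=0$, because the unique minimizer of $\frac12\|u\|^2$ over a set containing $0$ is $0$. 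Also, $\rho(x^\star,0)=-\lambda\cdot 0=0$ since $\bar g(x^\star)\le 0$.

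For (1)$\Leftrightarrow$(2), these facts reduce the closed loop \eqref{eq:rsgf_closed_loop} at $(x^\star,0)$ to $\dot x=-\nabla f(x^\star)$ and $\dot\eta=0$. So $(x^\star,0)$ is an equilibrium if and only if $\nabla f(x^\star)=0$. Alternatively, (1)$\Rightarrow$(2) follows from Proposition~\ref{prop:rsgf_equilibrium}: it gives $u_\tau(x^\star,0)=\nabla f(x^\star)$, and combining this with $u_\tau(x^\star,0)=0$ yields $\nabla f(x^\star)=0$.

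For (2)$\Rightarrow$(3), I would take $p^\ast=0$. Then stationarity, $p^\ast\ge0$ and complementary slackness all hold trivially, and feasibility holds since $x^\star\in\operatorname{int}(\mathcal C_g)\subset\mathcal C_g$. For (3)$\Rightarrow$(2), the key observation is that $x^\star$ in the interior forces $g_i(x^\star)<0$ for every $i$. Complementary slackness $(p^\ast)^\top g(x^\star)=0$ with $p^\ast\ge0$ gives $\sum_i p_i^\ast g_i(x^\star)=0$, a sum of nonpositive terms. Hence each $p_i^\ast g_i(x^\star)=0$, so $p^\ast=0$, and stationarity gives $\nabla f(x^\star)=0$.

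The main obstacle is justifying $g_i(x^\star)<0$ from $x^\star\in\operatorname{int}(\mathcal C_g)$. This is not automatic: a constraint like $g_i=-(x_1)^2$ can vanish at interior points. I would first try to read the paper's "strict interior" as $\{\bar g<0\}$, which is consistent with the "strict interior inactive regime" language. If the topological interior is intended, I would instead argue directly. Any KKT multiplier can be nonzero only on indices with $g_i(x^\star)=0$. Each such $g_i$ attains a local maximum (value $0$) at an interior point of $\mathcal C_g$, since $g_i\le0$ on a neighborhood. Therefore $\nabla g_i(x^\star)=0$, the term $\frac{\partial g}{\partial x}(x^\star)^\top p^\ast$ vanishes, and $\nabla f(x^\star)=0$ follows anyway. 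This second argument works under the paper's definitions, so I would adopt it as the main route.
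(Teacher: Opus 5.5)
Your proposal is correct and matches the paper's proof: the minimum-norm property together with $0\in\mathcal U_\tau(x^\star,0)$ forces $u_\tau(x^\star,0)=0$, one has $\rho(x^\star,0)=0$, the choice $p^\star=0$ gives (2)$\Rightarrow$(3), and for (3)$\Rightarrow$(2) the paper uses your fallback argument, namely that any $g_i$ with $g_i(x^\star)=0$ has a local maximum at the interior point, so $\nabla g_i(x^\star)=0$ and stationarity reduces to $\nabla f(x^\star)=0$. The differences are cosmetic: the paper combines your (3)$\Rightarrow$(2)$\Rightarrow$(1) into a single step (iii)$\Rightarrow$(i), and your tentative reading of the interior as $\{\bar g<0\}$ is not needed.
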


\begin{proof}
We prove \((i)\Rightarrow(ii)\Rightarrow(iii)\Rightarrow(i)\).

\((i)\Rightarrow(ii)\): If \((x^\star,0)\) is an equilibrium of \eqref{eq:rsgf_closed_loop}, then
\[
0=-\nabla f(x^\star)+u_\tau(x^\star,0).
\]
Since \(x^\star\in \operatorname{int}(\mathcal C_g)\), the assumption gives \(0\in \mathcal U_\tau(x^\star,0)\). Because \(u_\tau(x^\star,0)\) is the minimum-norm element of \(\mathcal U_\tau(x^\star,0)\), one has \(u_\tau(x^\star,0)=0\). Hence \(\nabla f(x^\star)=0\).

\((ii)\Rightarrow(iii)\): Since \(x^\star\in \operatorname{int}(\mathcal C_g)\subset \mathcal C_g\), one has \(g(x^\star)\le 0\). Choosing \(p^\star=0\in\mathbb R^m\), the KKT conditions reduce to \(\nabla f(x^\star)=0\), which holds by assumption. Therefore \(x^\star\in X_{\mathrm{KKT}}\).

\((iii)\Rightarrow(i)\): Let \(x^\star\in X_{\mathrm{KKT}}\cap \operatorname{int}(\mathcal C_g)\). Then there exists \(p^\star\in\mathbb R^m_{\ge 0}\) such that
\[
\nabla f(x^\star)+\frac{\partial g(x^\star)}{\partial x}^{\top}p^\star=0,
\quad
(p^\star)^\top g(x^\star)=0.
\]
If \(g_i(x^\star)<0\), then complementarity gives \(p_i^\star=0\). If \(g_i(x^\star)=0\), then, because \(x^\star\in \operatorname{int}(\mathcal C_g)\), there exists \(r>0\) such that \(B_r(x^\star)\subset \mathcal C_g\), and hence
\[
g_i(x)\le 0=g_i(x^\star),
\quad \forall x\in B_r(x^\star).
\]
Thus \(x^\star\) is a local maximizer of \(g_i\), so \(\nabla g_i(x^\star)=0\). Therefore, in either case,
\[
p_i^\star \nabla g_i(x^\star)=0,
\quad i=1,\dots,m.
\]
Hence
\[
\frac{\partial g(x^\star)}{\partial x}^{\top}p^\star
=
\sum_{i=1}^m p_i^\star \nabla g_i(x^\star)
=
0,
\]
and the KKT stationarity condition reduces to
\[
\nabla f(x^\star)=0.
\]

Since \(x^\star\in \operatorname{int}(\mathcal C_g)\), the assumption gives \(0\in \mathcal U_\tau(x^\star,0)\), and therefore \(u_\tau(x^\star,0)=0\). Moreover, \(x^\star\in\mathcal C_g\) implies \(\bar g(x^\star)\le 0\), so
\[
\rho(x^\star,0)=0.
\]
Consequently,
\[
-\nabla f(x^\star)+u_\tau(x^\star,0)=0,
\quad
\rho(x^\star,0)=0,
\]
and hence \((x^\star,0)\) is an equilibrium of \eqref{eq:rsgf_closed_loop}.
\end{proof}

Proposition \ref{prop:fixed_tau_interior_kkt} identifies the interior equilibria of the fixed horizon closed-loop system. We next strengthen this static characterization to a local dynamical one, and show that around a strict interior KKT point the recurrent correction remains inactive, so the closed-loop dynamics locally reduces to the nominal gradient flow and inherits its exponential convergence behavior.

\begin{theorem}[Local convergence to a interior KKT point]
\label{thm:fixed_tau_local_interior_convergence}
Assume $f\in C^2$ and $0\in \mathcal U_\tau(x,0)$ for all $x\in \operatorname{int}(\mathcal C_g)$. Let $x^\star\in \operatorname{int}(\mathcal C_g)$ satisfy $\nabla f(x^\star)=0$ and $\nabla^2 f(x^\star)\succ 0$. Then there exist neighborhoods $x^\star\in U_1\Subset U_0\Subset \operatorname{int}(\mathcal C_g)$ 
and constants $c>0$, $\mu>0$ such that every trajectory of \eqref{eq:rsgf_closed_loop} with $x(0)\in U_1$ and $\eta(0)=0$ satisfies
\[
u_\tau(x(t),\eta(t))=0,\quad \eta(t)=0,\quad x(t)\in U_0,\quad \forall t\ge 0,
\]
and
\[
\|x(t)-x^\star\|\le c e^{-\mu t}\|x(0)-x^\star\|,\quad \forall t\ge 0.
\]
Consequently, $(x(t),\eta(t))\to (x^\star,0)$ and $x^\star\in X_{\mathrm{KKT}}$.
\end{theorem}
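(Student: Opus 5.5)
Our plan is to show that, near \(x^\star\), the closed loop \eqref{eq:rsgf_closed_loop} restricted to the section \(\eta=0\) is exactly the nominal gradient flow \eqref{eq:nominal_gradient_flow}. We then transfer the standard exponential stability of a nondegenerate minimizer to the closed loop.

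\emph{Step 1: local strong convexity and sublevel-set neighborhoods.} Since \(f\in C^2\) and \(\nabla^2 f(x^\star)\succ 0\), we choose \(r>0\) such that \(\overline{B_r(x^\star)}\subset\operatorname{int}(\mathcal C_g)\) and \(m I\preceq \nabla^2 f(x)\preceq L I\) on \(\overline{B_r(x^\star)}\) for some \(0<m\le L\). We take \(U_0\triangleq B_r(x^\star)\). For the smaller neighborhood we use a sublevel set rather than a ball, because sublevel sets are invariant under descent. Setting \(V(x)=f(x)-f(x^\star)\), Taylor's theorem gives
\[
\tfrac m2\|x-x^\star\|^2\le V(x)\le \tfrac L2\|x-x^\star\|^2 \quad \text{on } U_0 .
\]
Hence the component \(\Omega\) of \(\{V< \tfrac m2 (r/2)^2\}\) containing \(x^\star\) satisfies \(\overline\Omega\subset U_0\). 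We then pick \(U_1\triangleq B_{\delta}(x^\star)\) with \(\delta\) small enough that \(\tfrac L2\delta^2<\tfrac m2(r/2)^2\), so \(U_1\subset\Omega\). The inclusions \(U_1\Subset U_0\Subset\operatorname{int}(\mathcal C_g)\) then hold.

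\emph{Step 2: inactivity of the correction and invariance.} On \(\operatorname{int}(\mathcal C_g)\times\{0\}\) the hypothesis gives \(0\in\mathcal U_\tau(x,0)\). By the minimum-norm argument following Proposition~\ref{prop:rsgf_wellposed}, this yields \(u_\tau(x,0)=0\). Moreover \(\rho(x,0)=-\lambda\cdot 0=0\). Thus on \(\Omega\times\{0\}\) the vector field of \eqref{eq:rsgf_closed_loop} equals \((-\nabla f(x),0)\).

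Let \(\tilde x(t)\) be the solution of the nominal gradient flow from \(x(0)\in U_1\); it is unique because \(\nabla f\) is \(C^1\). Along it we have \(\dot V=-\|\nabla f\|^2\le 0\), and \(\Omega\) is a connected sublevel component whose closure lies in \(U_0\). Hence \(\tilde x(t)\in\Omega\) for all \(t\ge0\), so it exists globally. The pair \((\tilde x(t),0)\) therefore satisfies \eqref{eq:rsgf_closed_loop}.

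\emph{Main obstacle.} The delicate point is uniqueness: we must show that every closed-loop trajectory from \((x(0),0)\) coincides with \((\tilde x,0)\). The closed-loop field may be nonsmooth, since \(u_\tau\) and \(\rho\) can be discontinuous elsewhere, so uniqueness cannot be assumed. We would argue by a first-exit-time argument. Let \(T\) be the supremum of times up to which \(x(t)\in\Omega\) and \(\eta(t)=0\). On \([0,T)\), we have \(x(t)\in\operatorname{int}(\mathcal C_g)\), so \(\bar g(x(t))\le 0\) and \(\dot\eta=-\lambda\eta\) with \(\eta(0)=0\); this forces \(\eta\equiv0\). Consequently \(\dot x=-\nabla f(x)\) a.e. there, and by the Lipschitz uniqueness of the nominal flow, \(x=\tilde x\) on \([0,T)\).

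If \(T<\infty\), continuity gives \(x(T)=\tilde x(T)\in\Omega\), which is open, and \(\eta(T)=0\). Because \(\bar g<0\)-type feasibility persists on an open neighborhood of \(x(T)\) inside \(\operatorname{int}(\mathcal C_g)\), the same reasoning extends the agreement beyond \(T\). This contradicts the definition of \(T\), so \(T=\infty\). This establishes \(u_\tau=0\), \(\eta=0\) and \(x(t)\in U_0\) for all \(t\ge0\).

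\emph{Step 3: exponential rate.} On \(U_0\), strong convexity gives the Polyak--Łojasiewicz inequality \(\|\nabla f(x)\|^2\ge 2mV(x)\). Combined with \(\dot V=-\|\nabla f\|^2\), this yields \(V(t)\le e^{-2mt}V(0)\). Using the quadratic bounds from Step 1, we obtain
\[
\|x(t)-x^\star\|\le \sqrt{L/m}\,e^{-mt}\|x(0)-x^\star\|,
\]
so we may take \(c=\sqrt{L/m}\) and \(\mu=m\). Convergence \((x(t),\eta(t))\to(x^\star,0)\) follows immediately. Finally, \(x^\star\in X_{\mathrm{KKT}}\) follows from Proposition~\ref{prop:fixed_tau_interior_kkt}, since \(\nabla f(x^\star)=0\) and \(x^\star\in\operatorname{int}(\mathcal C_g)\).
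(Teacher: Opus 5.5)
Your proposal is correct and follows the same route as the paper: on $\operatorname{int}(\mathcal C_g)\times\{0\}$ the minimum-norm correction vanishes and the timer stays at zero, so the closed loop coincides with the nominal gradient flow and inherits its local exponential stability at the nondegenerate minimizer. Your version is more explicit and more careful than the paper's. The paper invokes local exponential stability abstractly and reasons ``as long as $x(t)\in U_0$ and $\eta(t)=0$'' without an exit-time argument; it also asserts $\bar g<0$ on $U_0$, whereas only $\bar g\le 0$ is available and needed. Your sublevel-set construction, your PL-based constants $c=\sqrt{L/m}$ and $\mu=m$, and your first-exit-time uniqueness argument make these steps rigorous.
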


\begin{proof}
Since $\nabla^2 f(x^\star)\succ 0$, the equilibrium $x^\star$ of the nominal gradient flow $\dot x=-\nabla f(x)$ is locally exponentially stable. Hence there exist neighborhoods $x^\star\in U_1\Subset U_0\Subset \operatorname{int}(\mathcal C_g)$ and constants $c>0$, $\mu>0$ such that every solution of $\dot x=-\nabla f(x)$ with $x(0)\in U_1$ remains in $U_0$ for all $t\ge 0$ and satisfies
\[
\|x(t)-x^\star\|\le c e^{-\mu t}\|x(0)-x^\star\|,\quad \forall t\ge 0.
\]

Now consider the fixed horizon timed recovery safe gradient flow \eqref{eq:rsgf_closed_loop} with $x(0)\in U_1$ and $\eta(0)=0$. Because $U_0\subset \operatorname{int}(\mathcal C_g)$ and $0\in \mathcal U_\tau(x,0)$ for every $x\in \operatorname{int}(\mathcal C_g)$, the minimum-norm property gives $u_\tau(x,0)=0$ for all $x\in U_0$. Hence, as long as $x(t)\in U_0$ and $\eta(t)=0$, the closed-loop system reduces to $\dot x(t)=-\nabla f(x(t))$. Since $U_0\subset \operatorname{int}(\mathcal C_g)$, one has $\bar g(x(t))<0$ on $U_0$, and therefore $\dot\eta(t)=\rho(x(t),0)=0$. Thus $\eta(t)=0$ for all $t\ge 0$, and the closed-loop trajectory coincides exactly with the nominal gradient flow. The exponential estimate follows from the local exponential stability of $x^\star$ under $\dot x=-\nabla f(x)$.

Finally, Proposition \ref{prop:fixed_tau_interior_kkt} yields $x^\star\in X_{\mathrm{KKT}}$.
\end{proof}

Overall, the fixed horizon design yields a recurrently regulated optimization dynamics characterized by two coupled phases: nominal descent when the correction is inactive, and timer-driven recovery once infeasibility occurs. The timer dynamics progressively tighten the admissible violation margin during infeasible motion and thereby enforce return to the feasible set within the prescribed horizon. Accordingly, the defining behavior of the fixed horizon closed loop is bounded-duration feasibility loss together with guaranteed recurrent recovery, rather than monotone descent. We next study how this qualitative picture depends on the choice of the recovery horizon \(\tau\). 

\subsection{Compatibility and Horizon-Induced Tradeoff}
The preceding subsection describes the generic behavior induced by the fixed horizon recurrent mechanism. We now focus on regimes in which this mechanism becomes inactive or weakens as the recovery horizon varies. These results clarify how the proposed dynamics remains compatible with the nominal gradient flow and how the horizon parameter controls the balance between feasibility recovery and optimization freedom. Before stating the zero-horizon compatibility result, we introduce the CBF-type correction associated with the aggregated constraint \(\bar g(x)\le 0\). For \(x\in\mathcal C_g\), define
\[
\begin{aligned}
\mathcal U_0(x)\triangleq
\bigl\{u\in\mathbb R^n \mid\ 
&\nabla g_i(x)^\top u
\le \nabla g_i(x)^\top\nabla f(x)\\
&+\alpha(-\bar g(x)),\ 
\forall i\in\mathcal I_{\max}(x)
\bigr\}.
\end{aligned}
\]
When \(\mathcal U_0(x)\neq\varnothing\), define
\[
u_0(x)\triangleq
\arg\min_{u\in\mathcal U_0(x)}\frac12\|u\|^2 .
\]
The corresponding velocity field
\[
\dot x=-\nabla f(x)+u_0(x)
\]
is referred to here as the aggregated-constraint CBF-type safe gradient flow, since it is obtained by the minimum-norm correction of the nominal gradient flow subject to the zeroing-CBF-type condition induced by \(\bar g(x)\le 0\).

\begin{proposition}[Compatibility with safe gradient flow]
\label{prop:tau_zero_safe}
Fix \(x\in\mathcal C_g\) such that \(\mathcal U_0(x)\neq\varnothing\), and set \(\eta=0\). For each \(\tau>0\), let \(u_\tau(x,0)\) be defined by \eqref{eq:rsgf_qp_u}. Then
\[
\lim_{\tau\to 0}u_\tau(x,0)=u_0(x).
\]
Consequently, the zero-horizon limit of the closed-loop velocity is the aggregated-constraint CBF-type safe gradient flow
\[
\dot x=-\nabla f(x)+u_0(x).
\]
\end{proposition}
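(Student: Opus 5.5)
The plan is to treat the quadratic program \eqref{eq:rsgf_qp_u} at the fixed point \((x,0)\) as a family of minimum-norm projections onto polyhedra. These polyhedra have \emph{fixed} normals and right-hand sides that depend on \(\tau\). The convergence then follows from a monotonicity plus compactness argument, with no need for a general sensitivity theorem for quadratic programs.

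\emph{Step 1 (explicit form of the constraint set).} Since \(x\in\mathcal C_g\), we have \(\bar g(x)\le 0\) and hence \(\rho(x,0)=0\). Moreover \(B(x,0)=\gamma\tau-\bar g(x)\ge 0\), so \((x,0)\in\mathcal S_\tau\) for every \(\tau>0\). Writing \(a_i\triangleq\nabla g_i(x)\), the set \eqref{eq:Utau_def} becomes
\[
\mathcal U_\tau(x,0)=\bigl\{u\mid a_i^\top u\le a_i^\top\nabla f(x)+\alpha(\gamma\tau-\bar g(x)),\ \forall i\in\mathcal I_{\max}(x)\bigr\}.
\]
The index set \(\mathcal I_{\max}(x)\) and the normals \(a_i\) do not depend on \(\tau\). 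Only the common offset \(\beta(\tau)\triangleq\alpha(\gamma\tau-\bar g(x))\) varies. Because \(\alpha\) is strictly increasing and continuous, \(\beta(\tau)>\alpha(-\bar g(x))\) for all \(\tau>0\), and \(\beta(\tau)\to\alpha(-\bar g(x))\) as \(\tau\to0\).

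\emph{Step 2 (nesting and boundedness).} From Step 1 we get \(\mathcal U_0(x)\subseteq\mathcal U_\tau(x,0)\) for all \(\tau>0\). In particular \(\mathcal U_\tau(x,0)\neq\varnothing\), so \(u_\tau(x,0)\) exists and is unique by the strict convexity argument of Proposition \ref{prop:rsgf_wellposed}; this avoids any appeal to Assumption \ref{ass:descent_direction_weak}. By minimality of the norm over the larger set, \(\|u_\tau(x,0)\|\le\|u_0(x)\|\) for all \(\tau>0\), so the family is bounded.

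\emph{Step 3 (identification of limit points).} Take any sequence \(\tau_k\to0\). By boundedness, some subsequence \(u_{\tau_k}(x,0)\) converges to a limit \(\bar u\). Passing to the limit in the finitely many linear inequalities, using continuity of \(\beta\), gives \(a_i^\top\bar u\le a_i^\top\nabla f(x)+\alpha(-\bar g(x))\) for all \(i\in\mathcal I_{\max}(x)\), that is, \(\bar u\in\mathcal U_0(x)\). Lower semicontinuity of the norm gives \(\|\bar u\|\le\|u_0(x)\|\). Since \(u_0(x)\) is the unique minimum-norm element of the closed convex set \(\mathcal U_0(x)\), it follows that \(\bar u=u_0(x)\). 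Every sequence therefore has a subsequence converging to the same point \(u_0(x)\), so \(\lim_{\tau\to0}u_\tau(x,0)=u_0(x)\). Adding \(-\nabla f(x)\) yields the stated limit of the closed-loop velocity.

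The only delicate point is ensuring that the limit lies in \(\mathcal U_0(x)\) rather than merely in some larger set. This is exactly where continuity of \(\alpha\) at \(-\bar g(x)\) and the \(\tau\)-independence of \(\mathcal I_{\max}(x)\) are used: for fixed \(x\) there is no switching of active maximizers, so the constraint normals are frozen. A Hoffman-type Lipschitz bound on the projection would also give a convergence rate, but it is not needed for the stated result.
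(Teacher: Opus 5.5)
Your proposal is correct and follows essentially the same route as the paper. Both arguments use the nesting $\mathcal U_0(x)\subseteq\mathcal U_\tau(x,0)$ to get the uniform bound $\|u_\tau(x,0)\|\le\|u_0(x)\|$, then extract a convergent subsequence, pass to the limit in the finitely many linear inequalities, and invoke uniqueness of the minimum-norm element of $\mathcal U_0(x)$; your remark that existence of $u_\tau(x,0)$ follows from this nesting, without Assumption~\ref{ass:descent_direction_weak}, is a valid small tidying, whereas the paper instead adds an interpretive tangent-cone remark at $\partial\mathcal C_g$.
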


\begin{proof}
Fix \(x\in\mathcal C_g\) and set \(\eta=0\). Since \(\bar g(x)\le0\), one has \(\rho(x,0)=0\) and
\[
B_\tau(x,0)=\gamma\tau-\bar g(x).
\]
Hence the admissible control set \(\mathcal U_\tau(x,0)\) is given by
\[
\nabla g_i(x)^\top u
\le
\nabla g_i(x)^\top \nabla f(x)
+\alpha(\gamma\tau-\bar g(x)),
\forall i\in\mathcal I_{\max}(x).
\]
Since \(\alpha\) is continuous and strictly increasing with \(\alpha(0)=0\),
\[
\alpha(\gamma\tau-\bar g(x))
\to
\alpha(-\bar g(x)),
\quad \tau\to 0.
\]
Therefore, the defining half-spaces of \(\mathcal U_\tau(x,0)\) converge to those of \(\mathcal U_0(x)\).

Moreover, because \(\gamma\tau-\bar g(x)\ge -\bar g(x)\), monotonicity of \(\alpha\) gives
\[
\mathcal U_0(x)\subseteq \mathcal U_\tau(x,0),
\quad \forall \tau>0.
\]
Thus \(u_0(x)\in\mathcal U_\tau(x,0)\), and by the minimum-norm property,
\[
\|u_\tau(x,0)\|\le \|u_0(x)\|.
\]
Hence every sequence \(\tau_k\to 0\) admits a convergent subsequence \(u_{\tau_k}(x,0)\to \bar u\). Passing to the limit in the linear inequalities defining \(\mathcal U_{\tau_k}(x,0)\) yields \(\bar u\in\mathcal U_0(x)\). Also,
\[
\|\bar u\|\le \|u_0(x)\|.
\]
Since \(u_0(x)\) is the unique minimum-norm element of the closed convex set \(\mathcal U_0(x)\), it follows that \(\bar u=u_0(x)\). Since every convergent subsequence has the same limit, the full limit holds:
\[
\lim_{\tau\to 0}u_\tau(x,0)=u_0(x).
\]

Finally, by the definition of \(\mathcal U_0(x)\),
\[
\nabla g_i(x)^\top\bigl(-\nabla f(x)+u_0(x)\bigr)
\le
\alpha(-\bar g(x)),
\quad
\forall i\in\mathcal I_{\max}(x).
\]
Using \eqref{eq:gbar_dir}, we obtain
\begin{equation*}
\begin{aligned}
&D\bar g\bigl(x;-\nabla f(x)+u_0(x)\bigr)\\
&\quad =
\max_{i\in\mathcal I_{\max}(x)}
\nabla g_i(x)^\top
\bigl(-\nabla f(x)+u_0(x)\bigr)\\
&\quad \le \alpha(-\bar g(x)).
\end{aligned}
\end{equation*}
If \(x\in\partial\mathcal C_g\), then \(\bar g(x)=0\), and hence
\[
D\bar g\bigl(x;-\nabla f(x)+u_0(x)\bigr)\le \alpha(0)=0.
\]
This is precisely the tangent-cone condition ensuring instantaneous nonescape from \(\mathcal C_g\) at the boundary.
\end{proof}

Proposition \ref{prop:tau_zero_safe} identifies the zero-horizon limit of the fixed horizon design, in which the recurrent mechanism reduces to a CBF-type safe gradient flow. The next result considers the opposite limit \(\tau\to+\infty\), and shows that the corrective term vanishes asymptotically, so that the nominal gradient flow is recovered.

\begin{proposition}[Compatibility with nominal gradient flow]
\label{prop:tau_infty_nominal}
Assume that \(\alpha:\mathbb{R}\to\mathbb{R}\) satisfies $\lim_{s\to+\infty}\alpha(s)=+\infty$. 
Then, for every fixed augmented state \((x,\eta)\in\mathbb{R}^n\times\mathbb{R}_{\ge 0}\),
\[
\lim_{\tau\to+\infty}u_\tau(x,\eta)=0.
\]
Consequently,
\[
\lim_{\tau\to+\infty}\bigl(-\nabla f(x)+u_\tau(x,\eta)\bigr)
=
-\nabla f(x).
\]
\end{proposition}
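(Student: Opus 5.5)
The plan is to show that for fixed $(x,\eta)$ and all sufficiently large $\tau$, the zero control $u=0$ is admissible, i.e. $0\in\mathcal U_\tau(x,\eta)$. The minimum-norm property of \eqref{eq:rsgf_qp_u}, already noted after Proposition \ref{prop:rsgf_wellposed}, then forces $u_\tau(x,\eta)=0$ for all such $\tau$. Hence the limit holds trivially, and in fact the sequence is eventually constant. The second claim is then immediate by adding $-\nabla f(x)$.

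First, fix $(x,\eta)$ with $\eta\ge 0$. Observe that $\mathcal I_{\max}(x)$, $\nabla g_i(x)$, $\nabla f(x)$ and $\rho(x,\eta)$ do not depend on $\tau$; the timer dynamics \eqref{eq:eta_dyn} contain no $\tau$. Only $B$ depends on $\tau$, through
\[
B_\tau(x,\eta)=\gamma(\tau-\eta)-\bar g(x),
\]
and $B_\tau(x,\eta)\to+\infty$ as $\tau\to+\infty$ because $\gamma>0$. Consequently $(x,\eta)\in\mathcal S_\tau$ for all $\tau$ large enough, so $u_\tau(x,\eta)$ is defined there.

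Second, $0\in\mathcal U_\tau(x,\eta)$ amounts to the finitely many scalar inequalities
\[
0\le \nabla g_i(x)^\top\nabla f(x)-\gamma\rho(x,\eta)+\alpha(B_\tau(x,\eta)),\quad i\in\mathcal I_{\max}(x).
\]
Set $M\triangleq \max_{i\in\mathcal I_{\max}(x)}\bigl(\gamma\rho(x,\eta)-\nabla g_i(x)^\top\nabla f(x)\bigr)$, which is a finite constant independent of $\tau$. By the assumption $\alpha(s)\to+\infty$, there is $s_0$ with $\alpha(s)\ge M$ for all $s\ge s_0$. Choosing $\tau_0$ such that $B_\tau(x,\eta)\ge s_0$ for $\tau\ge\tau_0$ gives $0\in\mathcal U_\tau(x,\eta)$, and hence $u_\tau(x,\eta)=0$, for all $\tau\ge\tau_0$.

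The main subtlety is well-definedness. The proposition quantifies over all of $\mathbb R^n\times\mathbb R_{\ge 0}$, whereas $u_\tau$ is defined in Definition \ref{def:rsgf} only on $\mathcal S_\tau$. This is resolved by the observation above that the state eventually lies in $\mathcal S_\tau$. Moreover, once $0$ is feasible the QP has the unique minimizer $0$, so neither Assumption \ref{ass:descent_direction_weak} nor Proposition \ref{prop:rsgf_wellposed} is needed for large $\tau$.
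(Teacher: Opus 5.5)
Your proposal is correct and follows essentially the same route as the paper. Your constant $M$ coincides with the paper's $c(x,\eta)$, and both arguments show $0\in\mathcal U_\tau(x,\eta)$ for all $\tau$ beyond a finite threshold, so the minimum-norm property forces $u_\tau(x,\eta)=0$ there. Your explicit check that $(x,\eta)\in\mathcal S_\tau$ for large $\tau$, so that $u_\tau$ is defined at all, is a worthwhile addition; the paper covers this only implicitly, by choosing $s_{x,\eta}\ge 0$, which guarantees $B(x,\eta)\ge 0$.
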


\begin{proof}
Fix any \((x,\eta)\in\mathbb{R}^n\times\mathbb{R}_{\ge 0}\). Since \(\mathcal I_{\max}(x)\) is finite, the quantity
\[
c(x,\eta)
\triangleq
\max_{i\in\mathcal I_{\max}(x)}
\Bigl(
\nabla g_i(x)^\top(-\nabla f(x))
+\gamma \rho(x,\eta)
\Bigr)
\]
is a finite real number.

By the assumption that \(\alpha\) is unbounded above on \([0,\infty)\), there exists a scalar \(s_{x,\eta}\ge 0\) such that
\[
\alpha(s)\ge c(x,\eta),
\quad \forall s\ge s_{x,\eta}.
\]
Now define
\[
\tau_{x,\eta}
\triangleq
\max\left\{0,\frac{s_{x,\eta}+\gamma\eta+\bar g(x)}{\gamma}\right\}.
\]
Then, for every \(\tau\ge \tau_{x,\eta}\),
\[
B(x,\eta)
=
\gamma(\tau-\eta)-\bar g(x)
\ge s_{x,\eta},
\]
and therefore
\[
\alpha(B(x,\eta))\ge c(x,\eta).
\]

Hence, for every \(i\in\mathcal I_{\max}(x)\),
\[
\nabla g_i(x)^\top(-\nabla f(x))
+\gamma \rho(x,\eta)
\le c(x,\eta)
\le \alpha(B(x,\eta)),
\]
which is equivalent to
\[
\nabla g_i(x)^\top(-\nabla f(x))
\le
-\gamma \rho(x,\eta)+\alpha(B(x,\eta)).
\]
By the definition of \(\mathcal K_\tau(x,\eta)\), this shows that
\[
-\nabla f(x)\in\mathcal K_\tau(x,\eta).
\]
Equivalently, by the definition of \(\mathcal U_\tau(x,\eta)\),
\[
0\in\mathcal U_\tau(x,\eta).
\]

Since \(u_\tau(x,\eta)\) is defined as the minimum-norm element of \(\mathcal U_\tau(x,\eta)\), and \(0\) is feasible, it follows that
\[
u_\tau(x,\eta)=0,
\quad \forall \tau\ge \tau_{x,\eta}.
\]
Therefore,
\[
\lim_{\tau\to+\infty}u_\tau(x,\eta)=0.
\]
The second conclusion follows immediately:
\[
\lim_{\tau\to+\infty}\bigl(-\nabla f(x)+u_\tau(x,\eta)\bigr)
=
-\nabla f(x).
\]
\end{proof}

Proposition \ref{prop:tau_infty_nominal} characterizes the asymptotic effect of enlarging the recovery horizon to its extreme limit: as \(\tau\to+\infty\), the recurrent correction vanishes and the closed-loop dynamics degenerates pointwise to the nominal gradient flow. We next move from this asymptotic statement to a finite-\(\tau\) comparison. The following result shows how increasing the recovery horizon enlarges the admissible sets, weakens the minimum-norm correction, and correspondingly relaxes the recovery requirement at the feasible boundary. 

\begin{theorem}[Recovery-exploration tradeoff]
\label{prop:tau_tradeoff}
Let \(0<\tau_1\le \tau_2\), and fix any augmented state $(x,\eta)\in\mathbb{R}^n\times\mathbb{R}_{\ge 0}$. For each \(\tau>0\), define
\[
B_\tau(x,\eta)\triangleq \gamma(\tau-\eta)-\bar g(x),
\]
and let \(\mathcal S_\tau\), \(\mathcal K_\tau(x,\eta)\), \(\mathcal U_\tau(x,\eta)\), and \(u_\tau(x,\eta)\) be defined as in
\eqref{eq:Saug_def}, \eqref{eq:Ktau_def}, \eqref{eq:Utau_def}, and \eqref{eq:rsgf_qp_u}, respectively.
Then the following statements hold.
\begin{enumerate}
\item
\[
\mathcal S_{\tau_1}\subseteq \mathcal S_{\tau_2}, 
\mathcal K_{\tau_1}(x,\eta)\subseteq \mathcal K_{\tau_2}(x,\eta), 
\mathcal U_{\tau_1}(x,\eta)\subseteq \mathcal U_{\tau_2}(x,\eta).
\]
\item If the quadratic programs defining \(u_{\tau_1}(x,\eta)\) and \(u_{\tau_2}(x,\eta)\) are well posed, then $\|u_{\tau_2}(x,\eta)\|\le \|u_{\tau_1}(x,\eta)\|$. 
\item If \(x\in\partial\mathcal C_g\) and \(\eta=0\), and if
\[
\xi_{\tau_j}\triangleq -\nabla f(x)+u_{\tau_j}(x,0),
\quad j=1,2,
\]
then
\[
D\bar g(x;\xi_{\tau_j})\le \alpha(\gamma\tau_j),
\quad j=1,2.
\]
\end{enumerate} 
\end{theorem}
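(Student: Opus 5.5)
The plan is to derive all three items from one monotonicity fact: for fixed $(x,\eta)$, the barrier value $B_\tau(x,\eta)=\gamma(\tau-\eta)-\bar g(x)$ is nondecreasing in $\tau$ (it is affine in $\tau$ with slope $\gamma>0$). Hence $B_{\tau_1}(x,\eta)\le B_{\tau_2}(x,\eta)$ whenever $\tau_1\le\tau_2$. Since $\alpha$ is strictly increasing, this gives $\alpha(B_{\tau_1}(x,\eta))\le\alpha(B_{\tau_2}(x,\eta))$. The other quantities entering the constraints, namely $\rho(x,\eta)$, $\mathcal I_{\max}(x)$, $\nabla g_i(x)$ and $\nabla f(x)$, do not depend on $\tau$.

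For item 1, I would argue by inclusion of the defining inequalities. If $(x,\eta)\in\mathcal S_{\tau_1}$, then $\eta\ge0$ and $0\le B_{\tau_1}(x,\eta)\le B_{\tau_2}(x,\eta)$, so $(x,\eta)\in\mathcal S_{\tau_2}$. If $\xi\in\mathcal K_{\tau_1}(x,\eta)$, then for each $i\in\mathcal I_{\max}(x)$
\[
\nabla g_i(x)^\top\xi\le-\gamma\rho(x,\eta)+\alpha(B_{\tau_1}(x,\eta))\le-\gamma\rho(x,\eta)+\alpha(B_{\tau_2}(x,\eta)),
\]
so $\xi\in\mathcal K_{\tau_2}(x,\eta)$. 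The inclusion for $\mathcal U_\tau$ then follows from the equivalence $u\in\mathcal U_\tau(x,\eta)\Leftrightarrow-\nabla f(x)+u\in\mathcal K_\tau(x,\eta)$ recorded after \eqref{eq:Utau_def}. The same chain of inequalities can also be written directly on \eqref{eq:Utau_def}.

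For item 2, well-posedness means the two QPs have unique minimizers; on $\mathcal S_\tau$ this is Proposition~\ref{prop:rsgf_wellposed}. Since $u_{\tau_1}(x,\eta)\in\mathcal U_{\tau_1}(x,\eta)\subseteq\mathcal U_{\tau_2}(x,\eta)$, the point $u_{\tau_1}(x,\eta)$ is feasible for the $\tau_2$ problem. Optimality of $u_{\tau_2}(x,\eta)$ then gives $\tfrac12\|u_{\tau_2}\|^2\le\tfrac12\|u_{\tau_1}\|^2$, which is the claim.

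For item 3, take $x\in\partial\mathcal C_g$ and $\eta=0$. Then $\bar g(x)=0$, so $(x,0)\in\mathcal S_{\tau_j}$, $\rho(x,0)=0$ and $B_{\tau_j}(x,0)=\gamma\tau_j$. Well-posedness at $(x,0)$ holds with no extra assumption, because $0\in\mathcal K_{\tau_j}(x,0)$ by the feasible case in the proof of Theorem~\ref{thm:well_definedness}; hence $0\in\mathcal U_{\tau_j}(x,0)-\nabla f(x)$ is nonempty, and Proposition~\ref{prop:rsgf_wellposed}'s argument applies. By construction $\xi_{\tau_j}\in\mathcal K_{\tau_j}(x,0)$, so by Lemma~\ref{lem:gbar_directional}
\[
D\bar g(x;\xi_{\tau_j})=\max_{i\in\mathcal I_{\max}(x)}\nabla g_i(x)^\top\xi_{\tau_j}\le\alpha(\gamma\tau_j).
\]
This mirrors item~\ref{restorative_mechanism_item3} of Lemma~\ref{prop:restorative_mechanism}.

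The only genuinely delicate point is the well-posedness hypothesis in item 2 for arbitrary $(x,\eta)$ outside $\mathcal S_{\tau_1}$. I handle it by taking existence of the minimizers as given, as the statement does, and noting that uniqueness follows from strict convexity on a closed convex set. Everything else is a direct monotonicity argument.
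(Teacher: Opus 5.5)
Your proposal is correct and follows essentially the same route as the paper. Monotonicity of $B_\tau$ in $\tau$ yields the three inclusions, and feasibility of $u_{\tau_1}$ in the $\tau_2$ problem yields the norm comparison; item 3 then follows from $\xi_{\tau_j}\in\mathcal K_{\tau_j}(x,0)$ with $\rho(x,0)=0$ and $B_{\tau_j}(x,0)=\gamma\tau_j$, together with Lemma~\ref{lem:gbar_directional}. Your extra check that the QP at $(x,0)$ is automatically well posed (because $0\in\mathcal K_{\tau_j}(x,0)$) is a small detail the paper leaves implicit.
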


\begin{proof}
Since \(0<\tau_1\le \tau_2\), one has
\[
B_{\tau_2}(x,\eta)-B_{\tau_1}(x,\eta)
=
\gamma(\tau_2-\tau_1)\ge 0.
\]
Hence
\[
B_{\tau_2}(x,\eta)\ge B_{\tau_1}(x,\eta).
\]

We first prove \(\mathcal S_{\tau_1}\subseteq \mathcal S_{\tau_2}\).
If \((x,\eta)\in\mathcal S_{\tau_1}\), then by definition
\[
B_{\tau_1}(x,\eta)\ge 0.
\]
Since \(B_{\tau_2}(x,\eta)\ge B_{\tau_1}(x,\eta)\), it follows that
\[
B_{\tau_2}(x,\eta)\ge 0,
\]
and therefore \((x,\eta)\in\mathcal S_{\tau_2}\).

Next, let \(\xi\in\mathcal K_{\tau_1}(x,\eta)\). Then, for every
\(i\in\mathcal I_{\max}(x)\),
\[
\nabla g_i(x)^\top \xi
\le
-\gamma\rho(x,\eta)+\alpha(B_{\tau_1}(x,\eta)).
\]
Since \(\alpha\) is strictly increasing and
\(B_{\tau_2}(x,\eta)\ge B_{\tau_1}(x,\eta)\), one has
\[
\alpha(B_{\tau_2}(x,\eta))\ge \alpha(B_{\tau_1}(x,\eta)).
\]
Therefore,
\[
\nabla g_i(x)^\top \xi
\le
-\gamma\rho(x,\eta)+\alpha(B_{\tau_2}(x,\eta)),
\quad
\forall i\in\mathcal I_{\max}(x),
\]
which proves that \(\xi\in\mathcal K_{\tau_2}(x,\eta)\). Hence
\[
\mathcal K_{\tau_1}(x,\eta)\subseteq \mathcal K_{\tau_2}(x,\eta).
\]

By the definition
\[
\mathcal U_\tau(x,\eta)
=
\{u\in\mathbb R^n\mid -\nabla f(x)+u\in\mathcal K_\tau(x,\eta)\},
\]
the inclusion
\[
\mathcal K_{\tau_1}(x,\eta)\subseteq \mathcal K_{\tau_2}(x,\eta)
\]
immediately yields
\[
\mathcal U_{\tau_1}(x,\eta)\subseteq \mathcal U_{\tau_2}(x,\eta).
\]

Assume now that the quadratic programs defining \(u_{\tau_1}(x,\eta)\) and \(u_{\tau_2}(x,\eta)\) are well posed. Since \(u_{\tau_j}(x,\eta)\) is the minimum-norm element of \(\mathcal U_{\tau_j}(x,\eta)\), \(j=1,2\), and since
\[
\mathcal U_{\tau_1}(x,\eta)\subseteq \mathcal U_{\tau_2}(x,\eta),
\]
one obtains
\[
\|u_{\tau_2}(x,\eta)\|
=
\min_{u\in\mathcal U_{\tau_2}(x,\eta)}\|u\|
\le
\min_{u\in\mathcal U_{\tau_1}(x,\eta)}\|u\|
=
\|u_{\tau_1}(x,\eta)\|.
\]

Finally, suppose \(x\in\partial\mathcal C_g\) and \(\eta=0\). Then
\[
\bar g(x)=0, 
\rho(x,0)=0, 
B_{\tau_j}(x,0)=\gamma\tau_j, 
j=1,2.
\]
By definition,
\[
\xi_{\tau_j}=-\nabla f(x)+u_{\tau_j}(x,0)\in \mathcal K_{\tau_j}(x,0),
\quad j=1,2.
\]
Hence, for every \(i\in\mathcal I_{\max}(x)\),
\[
\nabla g_i(x)^\top \xi_{\tau_j}
\le
\alpha(\gamma\tau_j),
\quad j=1,2.
\]
Using Lemma \ref{lem:gbar_directional}, we conclude that
\[
D\bar g(x;\xi_{\tau_j})
=
\max_{i\in\mathcal I_{\max}(x)} \nabla g_i(x)^\top \xi_{\tau_j}
\le
\alpha(\gamma\tau_j),
\quad j=1,2.
\]

Therefore, the admissible upper bound on the outward motion of \(\bar g\) at the feasible boundary becomes weaker as \(\tau\) increases. The final interpretation follows by combining the set inclusions, the minimum-norm comparison, and the boundary estimate above.
\end{proof}

\begin{remark}
Let \(\mathcal C_i\) and \(\mathcal C_j\) be two distinct path-connected components of \(\mathcal C_g\). Define
\begin{equation*}
\begin{aligned}
\tau_{ij}\triangleq
\inf\bigl\{\tau>0 \mid\;&
\mathcal C_i\times\{0\}\ \text{and}\ 
\mathcal C_j\times\{0\}\\
&\text{belong to the same path component of }\mathcal S_\tau
\bigr\}.
\end{aligned}
\end{equation*}
with the convention that \(\tau_{ij}=+\infty\) if the set above is empty.
According to Theorem \ref{thm:rsgf_recurrent} and Theorem \ref{prop:tau_tradeoff}, if a trajectory of \eqref{eq:rsgf_closed_loop} with initial condition \((x(0),\eta(0))\in \mathcal C_i\times\{0\}\) later reaches a state \((x(t),\eta(t))\) such that \(x(t)\in\mathcal C_j\), then necessarily \(\tau\ge \tau_{ij}\). Hence \(\tau_{ij}\) characterizes a purely topological obstruction threshold for transitions between feasible components.

This threshold should be interpreted only as a necessary accessibility condition. Even when \(\tau\ge \tau_{ij}\), the actual closed-loop vector field may still fail to generate such a transition, and no convergence conclusion follows from path-connectivity alone.
\end{remark}

\section{Simulations}
\label{sec: simulations}
In this section, we present two numerical examples to illustrate the qualitative behavior of the fixed horizon timed recovery safe gradient flow and to compare it with both a projected gradient flow and a standard gradient flow.

\begin{example}\label{Example1}
    Consider a two-dimensional inequality-constrained nonlinear program:
    \begin{equation}\label{eq:Example1}
        \begin{aligned}
            \min_{x_1,x_2}\quad & f(x_1,x_2)=(x_1-0.5)^2+x_2^2\\
        \mbox{s.t.}\quad &-\sin(\pi x_1)+x_2\le 0,\\
        &-1\le x_1\le 1,\\
        &-2\le x_2.
        \end{aligned}
    \end{equation}
\end{example}

According to \eqref{eq:Example1}, the feasible set $\mathcal C_g$ is
\[
\mathcal C_g=\{x\in\mathbb R^2\mid g_i(x)\le 0,\ i=1,\dots,4\},
\]
where
\begin{equation*}
\begin{aligned}
    &g_1(x)= -\sin(\pi x_1)+x_2,\\
    &\begin{aligned}
    &g_2(x)=x_1-1, &g_3(x)=-x_1-1,\\
    &g_4(x)=-x_2-2. &
\end{aligned}
\end{aligned}
\end{equation*}
Since \(\nabla g_1(x)=(-\pi\cos(\pi x_1),1)^\top\) is nonzero for all \(x\), and the remaining constraints are affine box-type constraints, a direct case-wise verification shows that Assumption \ref{ass:descent_direction_weak} holds.
The initial condition is chosen as
\[
x(0)=(-0.95,-0.4)^\top,\quad \eta(0)=0,
\]
which is feasible. The fixed parameters are
\[
\gamma=2,\quad \lambda=1000,\quad \alpha(s)=s,
\]
and the simulation horizon is \(T=5\). 
The timed recovery safe gradient flow is simulated for
\[
\tau\in\{0.5,\ 1.5,\ 2.5,\ 3.5\}.
\]
For comparison, we also implement both a projected gradient flow and a standard gradient flow, initialized identically.
For the timed recovery safe gradient flow, the augmented closed-loop dynamics are integrated by a fourth-order Runge--Kutta method. At each step, the control input is generated by the minimum-norm timed recovery controller \eqref{eq:rsgf_qp_u}. 

\begin{figure}[t]
    \centering
        \begin{subfigure}{0.47\columnwidth}
        \includegraphics[width=\linewidth]{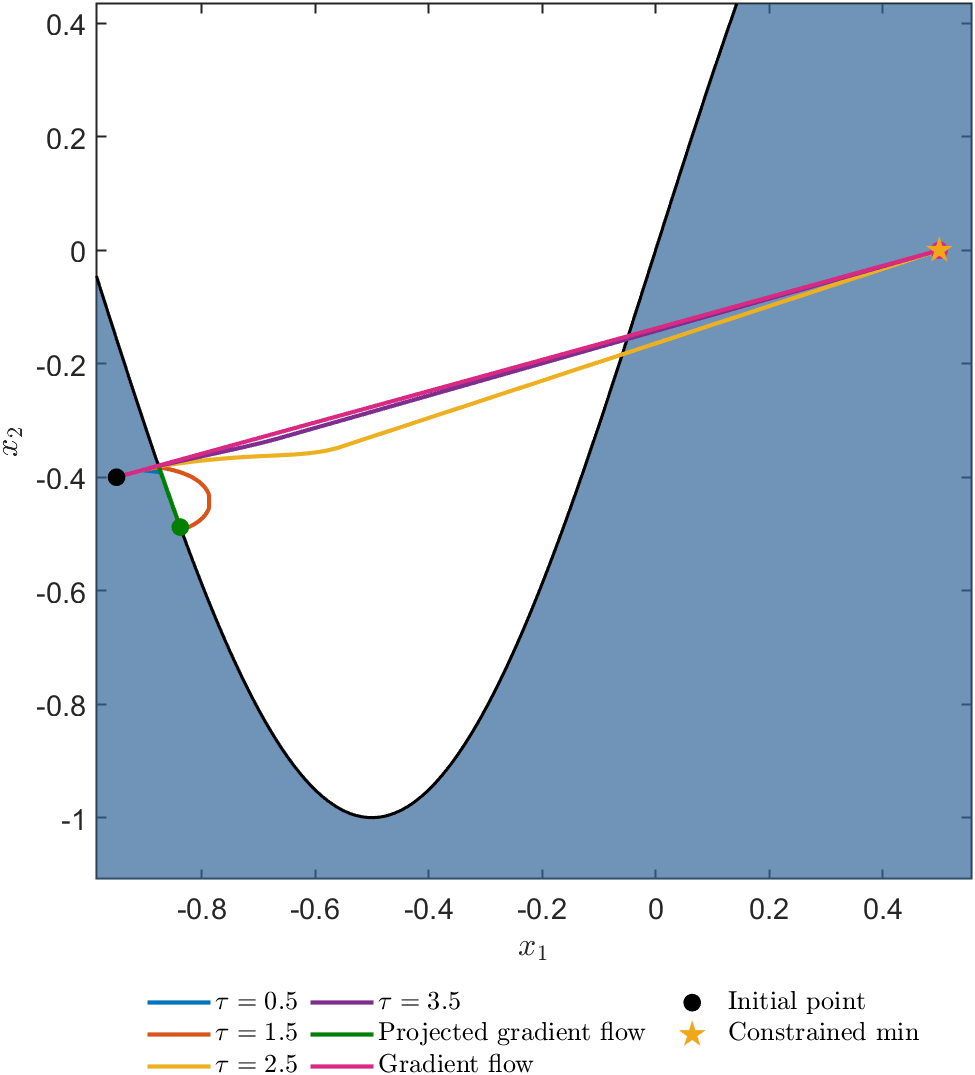}
        \caption{System trajectory}
        \label{fig:simu2_trajectory}
    \end{subfigure}
    \hfill 
    \begin{subfigure}{0.49\columnwidth}
        \includegraphics[width=\linewidth]{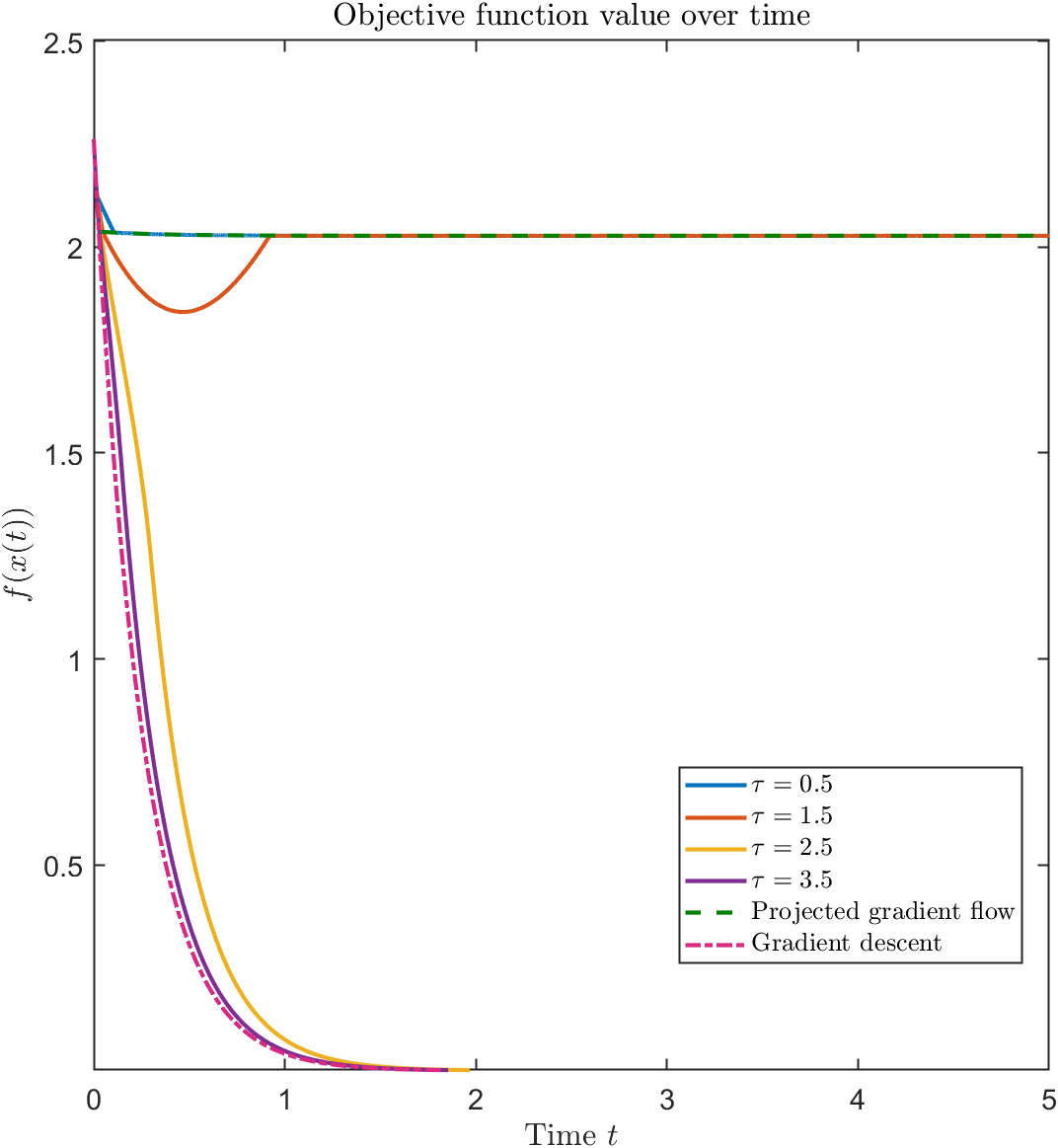}
        \caption{Function value}
        \label{fig:simu2_value}
    \end{subfigure}
    \caption{Trajectories in Example \ref{Example1}}
\end{figure}
Fig.~\ref{fig:simu2_trajectory} shows the trajectories in the \((x_1,x_2)\)-plane.
The black marker denotes the initial point, and the pentagram denotes the unconstrained minimizer, which is also the constrained minimizer,
\[
x^*=(0.5,0)^\top.
\]
Fig.~\ref{fig:simu2_value} illustrates the evolution of the objective function value $f(x)$.
These curves indicate that a larger $\tau$ allows the trajectory to follow the nominal gradient direction more closely and thereby yields a faster descent in this example, which is consistent with Proposition \ref{prop:tau_infty_nominal}.
Moreover, near the optimal point $x^*$, the trajectory converges and the objective value decreases at an exponential rate, in agreement with Theorem \ref{thm:fixed_tau_local_interior_convergence}.

\begin{example}\label{Example2}
    Consider a two-dimensional inequality-constrained nonlinear program  \cite{Gomez1982,sasena2002}:
    \begin{equation}\label{eq:Example2}
        \begin{aligned}
            \min_{x_1,x_2}\quad &f(x_1,x_2)=(4-2.1x_1^2+\frac{x_1^4}{3})x_1^2+x_1x_2+(4x_2^2-4)x_2^2\\
        \mbox{s.t. }\quad &-\sin(4\pi x_1)+2\sin^2(2\pi x_2)\le 0,\\
         &-1\le x_i\le 1,\quad i=1,2.
        \end{aligned}
    \end{equation}
\end{example}

Analogously, according to \eqref{eq:Example2}, the feasible set $\mathcal C_g$ is
\[
\mathcal C_g=\{x\in\mathbb R^2\mid g_i(x)\le 0,\ i=1,\dots,5\},
\]
where
\begin{equation*}
\begin{aligned}
    &g_1(x)= -\sin(4\pi x_1)+2\sin^2(2\pi x_2),\\
    &\begin{aligned}
    &g_2(x)=x_1-1, &g_3(x)=-x_1-1,\\
    &g_4(x)=x_2-1, &g_5(x)=-x_2-1.
\end{aligned}
\end{aligned}
\end{equation*}
The initial condition is chosen as
\[
x(0)=(-0.9,-0.5)^\top,\quad \eta(0)=0,
\]
which is feasible. 
The simulation horizon is set to $T = 20$, while all other parameters remain identical to those in Example \ref{Example1}.

\begin{figure}[t]
    \centering
        \begin{subfigure}{0.48\columnwidth}
        \includegraphics[width=\linewidth]{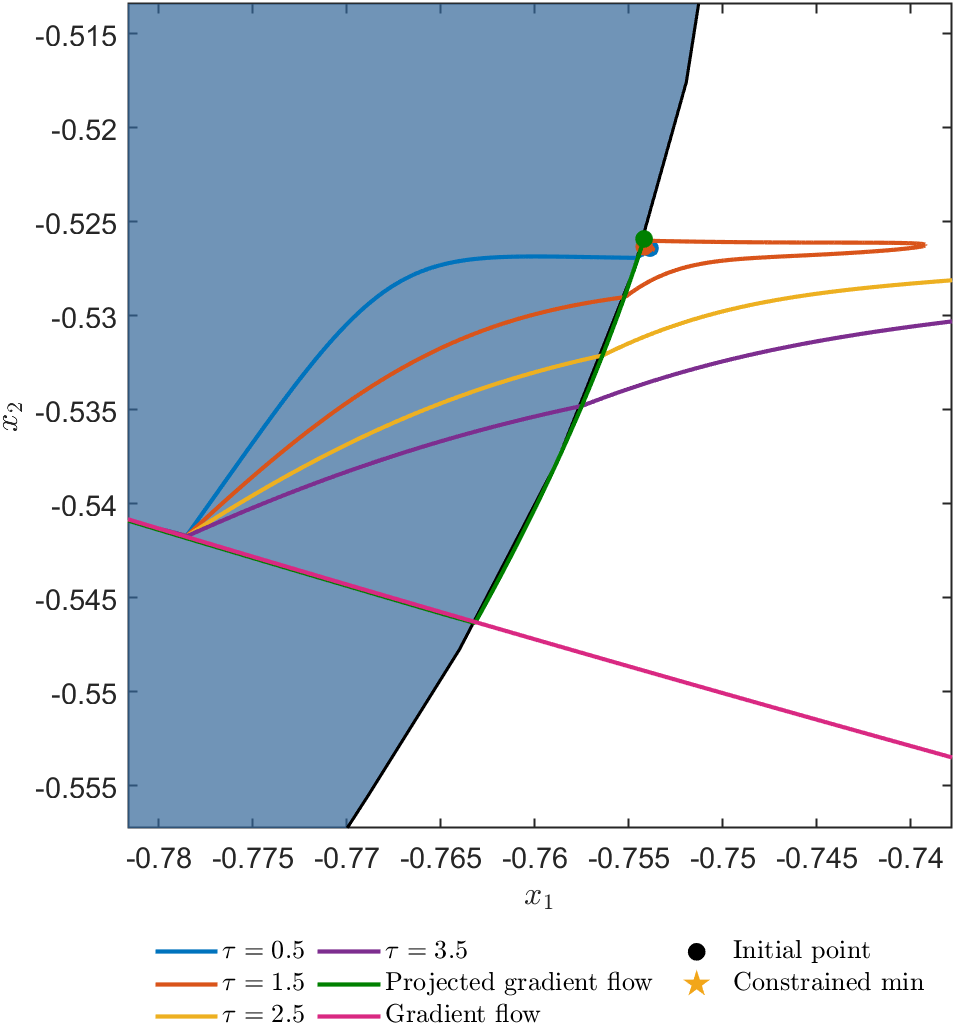}
        \caption{Local trajectory}
        \label{fig:simu1_global_trajectory}
    \end{subfigure}
    \hfill  
    \begin{subfigure}{0.46\columnwidth}
        \includegraphics[width=\linewidth]{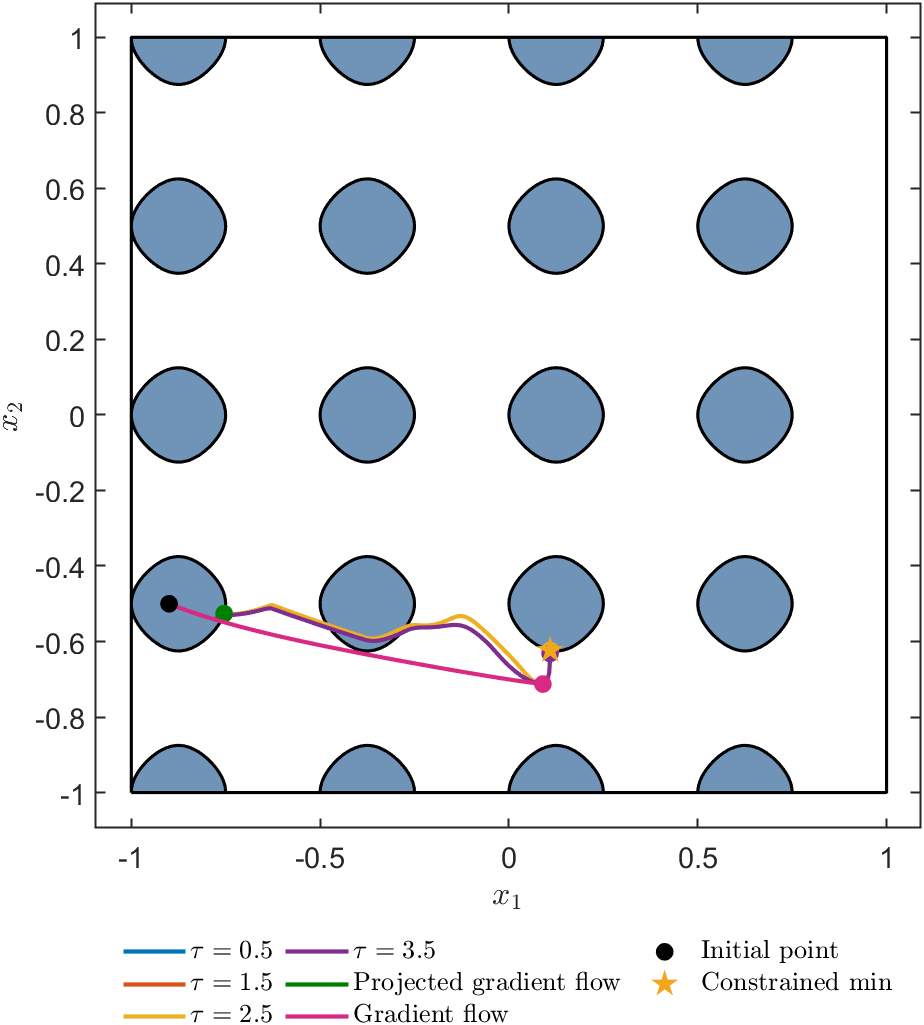}
        \caption{System trajectory}
        \label{fig:simu1_part}
    \end{subfigure}
    \begin{subfigure}{0.47\columnwidth}
        \includegraphics[width=\linewidth]{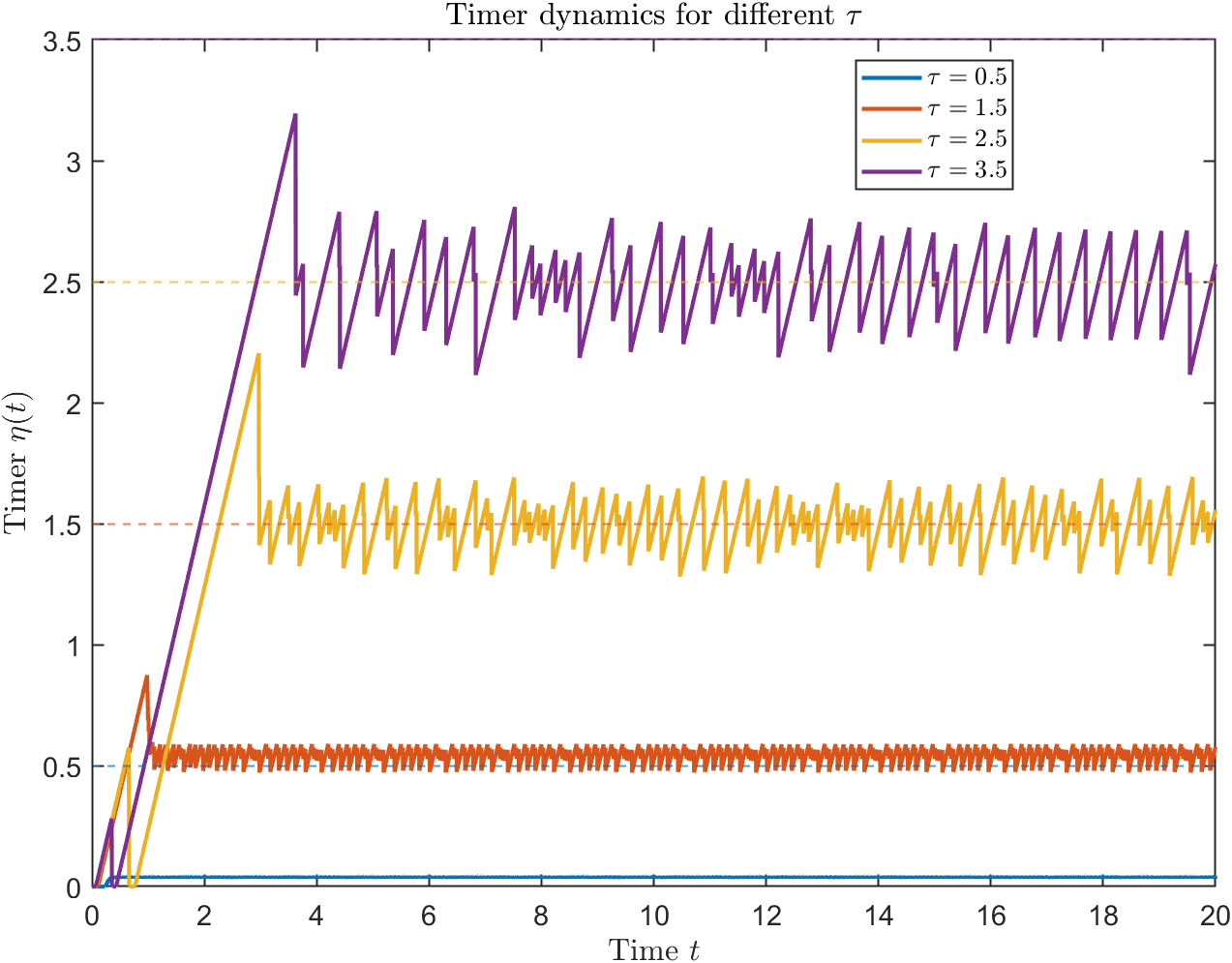}
        \caption{Timer trajectories}
        \label{fig:simu1_timer}
    \end{subfigure}
    \hfill  
    \begin{subfigure}{0.49\columnwidth}
        \includegraphics[width=\linewidth]{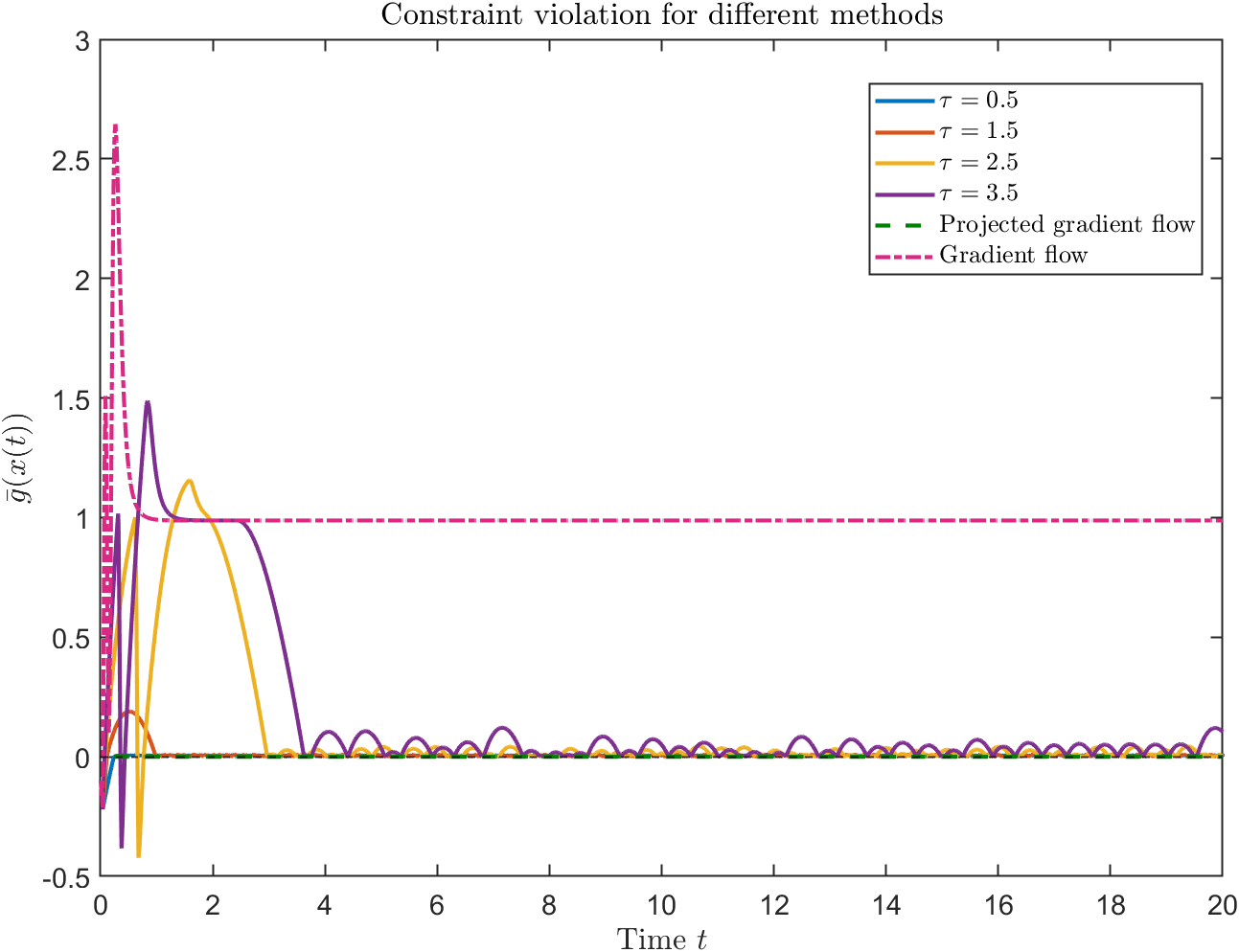}
        \caption{Aggregated violation}
        \label{fig:simu1_violation}
    \end{subfigure}
    \caption{Trajectories in Example \ref{Example2}}
\end{figure}

Figs.~\ref{fig:simu1_global_trajectory}-\ref{fig:simu1_part} show the trajectories in the \((x_1,x_2)\)-plane. The black marker denotes the initial point, and the pentagram denotes the constrained local minimizer
\[
x^\dagger=(0.1093,-0.6234)^\top.
\]
The projected-gradient baseline remains in the feasible region throughout the evolution because each update is followed by a projection onto \(\mathcal C_g\). By contrast, the timed recovery safe gradient flow is not required to preserve pointwise feasibility. Its trajectories may cross the boundary of \(\mathcal C_g\) temporarily, and the extent of this freedom is regulated by the horizon parameter \(\tau\). As \(\tau\) increases, the admissible violation budget
\[
B(x,\eta)=\gamma(\tau-\eta)-\bar g(x)
\]
becomes less restrictive, and the resulting trajectory is allowed to exploit a larger transient infeasible excursion before the restorative mechanism becomes dominant.
When $\tau$ is small ($\tau=0.5$ and $1.5$), the system favors safety; conversely, when 
$\tau$ is large ($\tau=2.5$ and $3.5$), the system favors exploration. This observation is consistent with Theorem \ref{prop:tau_tradeoff}.

Fig.~\ref{fig:simu1_timer} reports the timer trajectories corresponding to the four values of \(\tau\). The horizontal dashed line in each color marks the associated level \(y(t)=\tau(t)\). These curves visualize the internal recovery mechanism of the proposed dynamics. Whenever the trajectory enters the infeasible region, one has \(\bar g(x(t))>0\), and the timer starts to accumulate. Once the state returns to the feasible region, the timer decays according to \(\dot \eta=-1000 \eta\). 

Fig.~\ref{fig:simu1_violation} shows the aggregated violation \(\bar g(x(t))\). For the projected-gradient baseline, the curve stays at or below the zero line because the iterate is projected back to feasibility after each descent step. For the timed recovery safe gradient flow, positive values of \(\bar g(x(t))\) do occur, confirming that the proposed mechanism indeed permits temporary constraint violation. At the same time, these violations are not left unchecked. Once the timer grows and the remaining budget \(\tau-\eta(t)\) shrinks, the correction term becomes increasingly active and drives the trajectory back toward the feasible set. Therefore, the numerical results are fully consistent with the fixed-horizon recurrent interpretation developed in the previous sections: the closed-loop system should in general be understood not as a monotone feasible descent flow, but as a timer-regulated optimization dynamics alternating between descent-oriented motion and bounded-time feasible recovery.
 
Overall, these examples highlight the qualitative difference among these mechanisms. The projected-gradient baseline enforces stepwise feasibility restoration at every update. 

The gradient flow disregards the constraints and directly explores the optimal function value.
The timed recovery safe gradient flow relaxes this requirement and uses the timer state to regulate temporary excursions outside \(\mathcal C_g\). The function value trajectory, phase portrait, timer evolution, and aggregated violation together illustrate how the parameter \(\tau\) controls the tradeoff between conservative feasibility preservation and optimization-oriented exploration. 

\section{Conclusion}
This paper proposed a timed recovery safe gradient flow for inequality constrained continuous-time optimization. Instead of requiring pointwise feasibility at all times, the proposed framework replaces it with fixed horizon feasible recovery. By introducing a timer-regulated recurrent feasibility filter and computing the minimum-norm correction through a pointwise convex quadratic program, the resulting dynamics guarantees forward invariance of the augmented safe set and $\tau$ recurrent return to the original feasible set. The analysis further clarifies the main qualitative features of the proposed mechanism, including its exact agreement with the nominal gradient flow in a strict interior inactive regime and its increasingly restorative behavior as the remaining recovery budget decreases. Crucially, by permitting quantitatively regulated transient excursions outside the feasible boundary, this framework provides a highly promising continuous time approach for optimization problems characterized by disconnected feasible regions, overcoming the topological reachability limitations inherent in strictly forward invariant designs. Future work will study shrinking-horizon recovery mechanisms and the associated feasible reset sequence, with the goal of relating the asymptotic recovery process to the KKT structure of the original problem.

\raggedbottom
\begin{IEEEbiography}[{\includegraphics[width=1in,height=1.25in,clip,keepaspectratio]{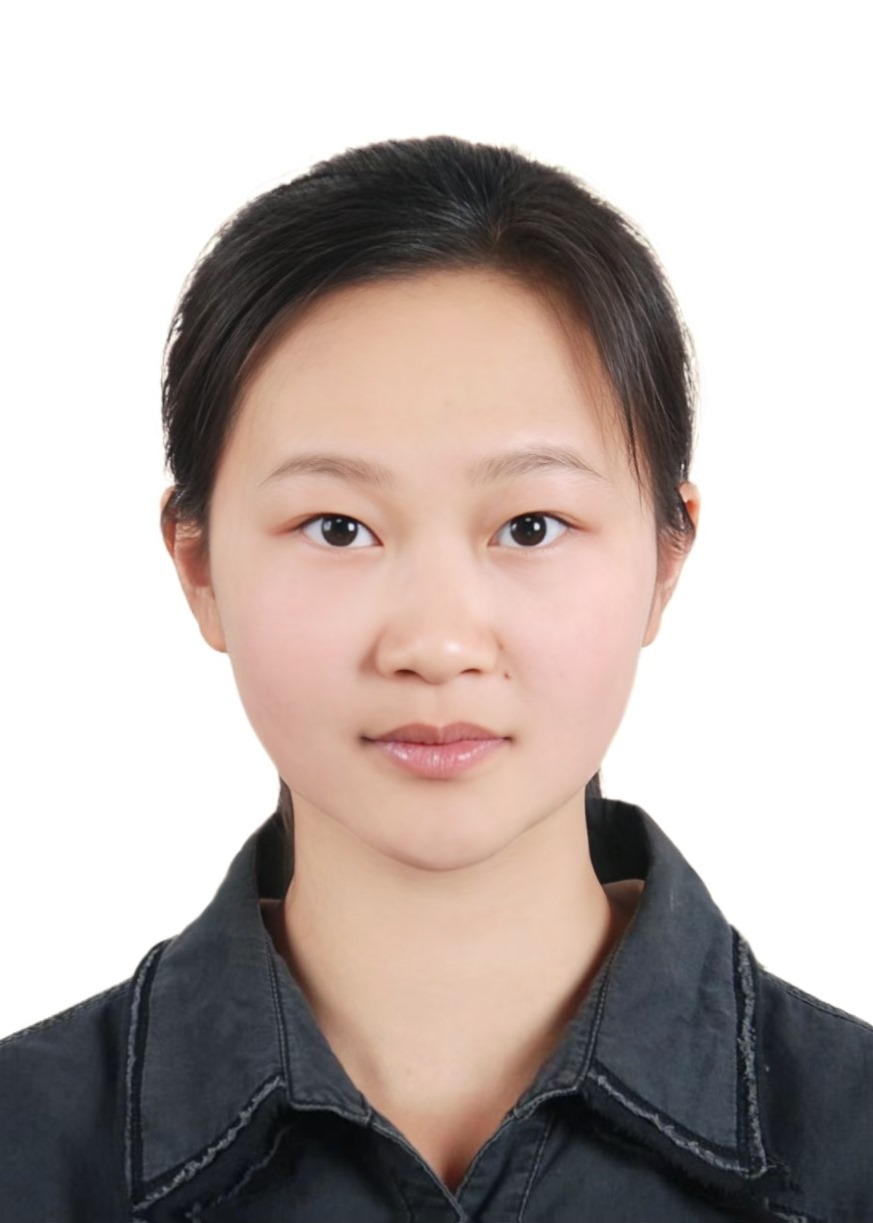}}]{Tianxi Luo}
Tianxi Luo is currently a graduate student with the College of Artificial Intelligence, Nankai University, Tianjin, China.

Her current research interests include control barrier functions, safety-critical control, distributed multi-agent systems, and constrained dynamical systems.
\end{IEEEbiography}

\vspace{-0.6\baselineskip}

\begin{IEEEbiography}[{\includegraphics[width=1in,height=1.25in,clip,keepaspectratio]{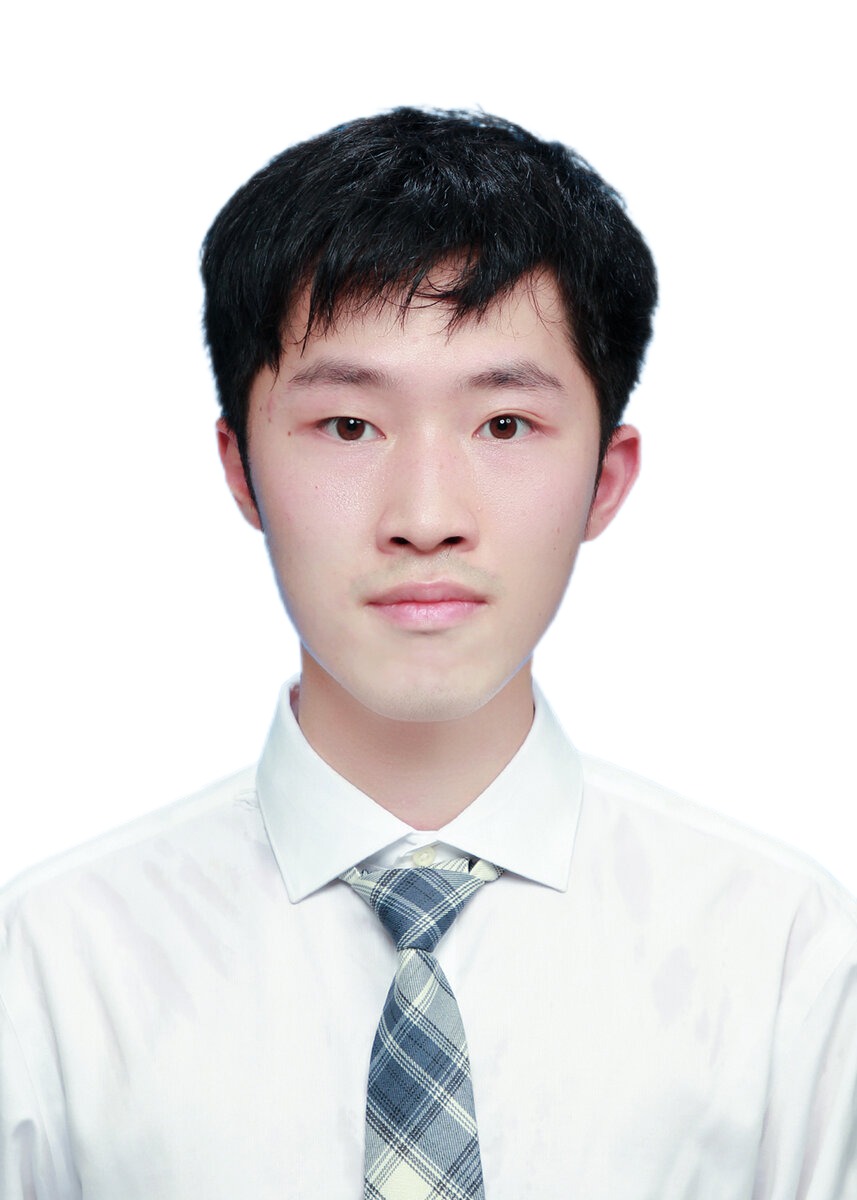}}]{Yue Wu}
Yue Wu is currently a graduate student with the State Key Laboratory of Mathematical Sciences, Academy of Mathematics and Systems Science, Chinese Academy of Sciences, Beijing, China.

His current research interests include safety-critical control, disturbance observer, logical control systems, and game theory.
\end{IEEEbiography}

\vspace{-0.6\baselineskip}

\begin{IEEEbiography}[{\includegraphics[width=1in,height=1.25in,clip,keepaspectratio]{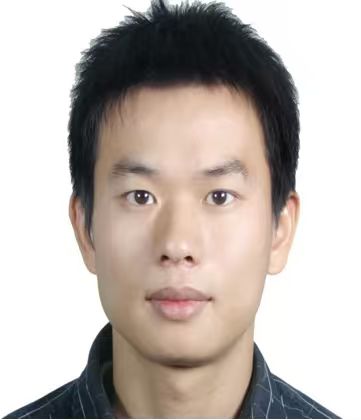}}]{Fei Chen}
(M’12–SM’15) received his Ph.D. in control theory and control engineering from Nankai University, China, in 2009. 

He currently holds the position of Full Professor and serves as the Vice Dean of the College of Artificial Intelligence at Nankai University. Before joining Nankai University, he worked as a postdoc researcher at Utah State University, USA (2009-2010), an Associate Professor at Xiamen University, China, and a Full Professor at Northeastern University, China. Additionally, he held visiting positions at the University of California, USA (2017-2018) and City University of Hong Kong, China (2008, 2013, 2016, 2024). He is the first author of the book “Distributed Average Tracking in Multi-agent Systems” (Springer, 2020). He has published over 100 peer-reviewed papers in international journals and conferences. He was named in the World's Top 2\% Scientist Rankings. He serves as a Senior Editor for IEEE ACCESS. His research interests include distributed control, distributed optimization, and distributed learning.
\end{IEEEbiography}
\vspace{-0.6\baselineskip}

\begin{IEEEbiography}[{\includegraphics[width=1in,height=1.25in,clip,keepaspectratio]{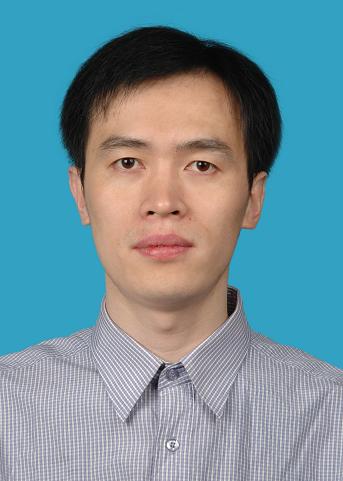}}]{Hongsheng Qi} received his Ph.D. degree in systems theory from Academy of Mathematics and Systems Science, Chinese Academy of
Sciences in 2008. From July 2008 to May 2010, he was a
postdoctoral fellow in the Key Laboratory of Systems Control,
Chinese Academy of Sciences. He currently is a professor with Academy
of Mathematics and Systems Science, Chinese Academy of Sciences. He was a recipient of ``Automatica''
2008-2011 Theory/Methodology Best Paper Prize in 2011 and a recipient of a second National Natural Science Award of China in 2014.
His research interests include logical dynamic systems, game theory, quantum networks, etc.
\end{IEEEbiography} 
\end{document}